\tikzset{face/.style={fill=black!15, fill opacity=0.7, semithick, line join=round}, vertex/.style={circle, inner sep=0.9pt, fill}, guide/.style={very thin}, every path/.style={semithick}}
\renewcommand{\emptyset}{\varnothing}
\renewcommand{\hat}{\widehat}
\newcommand{\Mu}{\mathrm{M}}
\newcommand{\Nu}{\mathrm{N}}
\DeclareMathOperator{\Skel}{Skel}
\DeclarePairedDelimiter{\abs}{\lvert}{\rvert}
\newcommand{\Squelta}{\mathord{{\tikz[baseline=0.5pt]{\fill [even odd rule] (0,0) [rounded corners=0.15pt] rectangle (0.68em,0.68em) (0.04em,0.07em) [sharp corners] rectangle (0.68em-0.09em,0.68em-0.04em); \useasboundingbox (-0.08em,0) rectangle (0.68em+0.08em,0.68em);}}}}
\theoremstyle{plain}
\newtheorem{thm}{Theorem}[section]
\newtheorem{lma}[thm]{Lemma}
\newtheorem{prop}[thm]{Proposition}
\newtheorem{crl}[thm]{Corollary}
\theoremstyle{definition}
\newtheorem*{dfn}{Definition}
\theoremstyle{remark}
\Crefname{thm}{Theorem}{Theorems}
\Crefname{lma}{Lemma}{Lemmas}
\Crefname{prop}{Proposition}{Propositions}
\Crefname{crl}{Corollary}{Corollaries}
\Crefname{conj}{Conjecture}{Conjectures}
\Crefname{dfn}{Definition}{Definitions}
\Crefname{claim}{Claim}{Claims}
\Crefname{rmk}{Remark}{Remarks}
\Crefname{eg}{Example}{Examples}
\Crefname{page}{Page}{Pages}
\title{Reconstructing $d$-manifold subcomplexes of cubes from their $(\lfloor d/2 \rfloor + 1)$-skeletons}
\author{Rowan Rowlands \\ {\small University of Washington, \url{rowanr@uw.edu}}\footnote{MSC classes: 52B70 (Primary), 05E45, 52B05, 57P10 (Secondary). Keywords: cubical complexes, manifolds, Poincar\'e duality, simplicial complexes}}
\begin{document}

\maketitle

\begin{abstract}
In 1984, Dancis proved that any $d$-dimensional simplicial manifold is determined by its $\left( \left\lfloor \frac{d}{2} \right\rfloor + 1 \right)$-skeleton. This paper adapts his proof to the setting of cubical complexes that can be embedded into a cube of arbitrary dimension. Under some additional conditions (for example, if the cubical manifold is a sphere), the result can be tightened to the $\left\lceil \frac{d}{2} \right\rceil$-skeleton when $d \geq 3$.
\end{abstract}

\section{Introduction}

\subsection{Polytope reconstruction}

The problem of reconstructing polytopes or related structures from partial information has seen much interest. To reconstruct an arbitrary $d$-dimensional polytope, for example, one needs to know its $(d-2)$-skeleton; however, some special classes of polytopes are determined by much less information: for example, zonotopes and simple $d$-dimensional polytopes are determined by their graphs, i.e.\ their $1$-skeletons. \Citet{art:Bayer} gives an excellent survey of this field.

Simplicial polytopes, and more generally simplicial spheres and manifolds, lie in between these two extremes. In \citep{art:Dancis}, Dancis presents a neat homological proof that $d$-dimensional simplicial manifolds are determined by their $\left( \left\lfloor \frac{d}{2} \right\rfloor + 1 \right)$-skeleton, generalising an idea for simplicial spheres attributed to Perles. In this paper, we modify Dancis's argument to prove a similar result for a class of cubical manifolds, namely those that can be embedded as cubical complexes into the standard cube $I^n$ for some (potentially large) $n$.

To reconstruct the $(k+1)$-skeleton of a manifold (simplicial or cubical) from its $k$-skeleton, we need to determine what the $(k+1)$-dimensional faces are. If $F$ is a $(k+1)$-dimensional face, its boundary is a subcomplex of the $k$-skeleton isomorphic to the boundary of a simplex or cube. Therefore, determining the $(k+1)$-dimensional faces of the complex amounts to answering the following question: if $S^k$ is a subcomplex of the $k$-skeleton that is isomorphic to the boundary of a $(k+1)$-dimensional simplex or cube, is it actually the boundary of a $(k+1)$-dimensional face of the complex, or did it just show up by coincidence? Dancis's key insight was that this question can be answered with knowledge of just the $k$-skeleton, if $k$ is large enough compared to $d$, thanks to Poincar\'e duality. In this paper, we adapt Dancis's argument to cubical manifolds that are embeddable in cubes. Our main results are \cref{thm:main,thm:main-tighter,thm:main-dimensions}.

\subsection{On subcomplexes of the cube}

At first glance, restricting our attention to complexes that can be embedded as a subcomplex of a cube may seem like a strict and arbitrary condition. However, we present three reasons why it is natural to consider these types of complexes.

First, there is the analogy to the simplicial world. Every simplicial complex is a subcomplex of a simplex: a simplicial complex is a collection of \emph{some} subsets of $\{1, \dotsc, n\}$, while the $(n-1)$-simplex is the collection of \emph{all} subsets of $\{1, \dotsc, n\}$.

Second, cubical complexes often arise in applied settings as subcomplexes of the standard grid of unit cubes in $\mathbb R^n$ (see \citep{book:Kaczynski-et-al}). \citet[Theorem~2.7]{art:Blass-Hosztynski} prove that any finite subcomplex of this grid can be embedded in some cube $I^N$ (with $N$ potentially much larger than $n$).

Third, many common families of cubical complexes can be embedded in the cube. This list includes boundaries of cubical zonotopes \citep[Corollary~7.17]{book:Ziegler}, stacked cubical polytopes (also called ``capped''), the neighbourly cubical spheres constructed by \citet{art:BBC}, $\mathrm{CAT}(0)$ complexes \citep[p.~149]{art:Ardila}, and cubical barycentric subdivisions of simplicial complexes \citep[Theorem~1.1]{art:Blass-Hosztynski}. Similarly, many operations on cubical complexes preserve the property of being a subcomplex of a cube, like product and disjoint union, cubical fissuring as defined by \citet{art:BBC}, and certain cubical versions of Pachner's bistellar moves considered by \citet[Proposition~3.7.3]{art:Funar}.

On the other hand, we are obliged to point out that there are many cubical complexes that cannot be embedded into any cube. Fortunately, though, there is a fairly straightforward criterion for determining whether a given cubical complex can be embedded in a cube, for which we will need the following under-appreciated lemma.

\begin{lma}[{\citet[Lemma~12]{art:Ehrenborg-Hetyei}}] \label{thm:Ehrenborg-Hetyei}
If $f : V(I^m) \to V(I^n)$ is an injection from the vertices of the cube $I^m$ to vertices of $I^n$ which maps edges to edges, that is, an embedding of the underlying graph $\mathcal G(I^m)$ into $\mathcal G(I^n)$, then the image of $f$ is a face of $I^n$.
\end{lma}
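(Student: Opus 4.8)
The plan is to argue by induction on $m$. When $m=0$ the image of $f$ is a single vertex of $I^n$, hence a $0$-face; and when $2^m > 2^n$ no such injection exists and the claim is vacuous. For the inductive step I would write $I^m \cong I^{m-1}\times I^1$ and split $V(I^m) = V(I^{m-1})\times\{0,1\}$, writing a typical vertex as $(v,\epsilon)$. Restricting $f$ to $V(I^{m-1})\times\{0\}$ and to $V(I^{m-1})\times\{1\}$ gives two embeddings of the graph $G(I^{m-1})$ into $G(I^n)$, so by the inductive hypothesis their images $F_0$ and $F_1$ are faces of $I^n$; each has $2^{m-1}$ vertices and so is an $(m-1)$-face, and $F_0\cap F_1 = \emptyset$ because $f$ is injective. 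Write $F_\epsilon = \{x\in V(I^n) : x_j = c^\epsilon_j \text{ for all } j\in S_\epsilon\}$, so $\abs{S_\epsilon} = n-m+1$.

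The heart of the argument is the claim that there is a single coordinate $i^\ast\in\{1,\dots,n\}$ such that $f(v,0)$ and $f(v,1)$ differ in exactly coordinate $i^\ast$, for every $v\in V(I^{m-1})$. Since $(v,0)$ and $(v,1)$ are adjacent, $f(v,0)$ and $f(v,1)$ differ in a single coordinate $i(v)$; what must be shown is that $i(v)$ is independent of $v$. If $v,v'\in V(I^{m-1})$ are adjacent, then $(v,0),(v',0),(v',1),(v,1)$ is a square $2$-face of $I^m$, so — $f$ being injective and adjacency-preserving — its four images form a $4$-cycle in $G(I^n)$, traversed as $f(v,0)\to f(v',0)\to f(v',1)\to f(v,1)\to f(v,0)$. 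Along any closed walk each flipped coordinate occurs an even number of times, and a short case check shows that in a $4$-cycle of a hypercube graph the two opposite edges flip the same coordinate; comparing the opposite edges $f(v',0)f(v',1)$ and $f(v,1)f(v,0)$ gives $i(v) = i(v')$. As $G(I^{m-1})$ is connected, $i(v)$ is constant, equal to some $i^\ast$.

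It then remains to read off the image. Since $f(v,0)$ and $f(v,1)$ agree outside coordinate $i^\ast$, I would check that $i^\ast\in S_0\cap S_1$ (otherwise, with $i^\ast$ free on $F_0$, one finds $f(v,1)\in F_0\cap F_1$, a contradiction) and that $c^0_{i^\ast}\neq c^1_{i^\ast}$; moreover $F_0$ and $F_1$ have a common image $\bar F$ under the projection forgetting coordinate $i^\ast$, and $\bar F$ is an $(m-1)$-face of the cube on the remaining coordinates. Hence $f(V(I^m)) = F_0\cup F_1$ is exactly the $m$-face of $I^n$ obtained from $\bar F$ by leaving coordinate $i^\ast$ free, closing the induction. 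The one genuinely non-formal step is the claim about $i^\ast$, namely that all the bridging edges $f(v,0)f(v,1)$ point in the same coordinate direction; this is also the only place where the cubical structure, rather than just graph theory, is used essentially, through the analysis of squares together with connectivity of $G(I^{m-1})$.
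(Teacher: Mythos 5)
The paper cites this lemma to Ehrenborg and Hetyei without reproducing a proof, so there is no proof in the paper itself to compare against; I will assess your argument on its own terms. Your proof is correct and well organized. The base case and the reduction are fine; the restriction of $f$ to each of the two $(m-1)$-dimensional ``layers'' is indeed an injective, edge-preserving map, so the inductive hypothesis applies, and a face of $I^n$ with $2^{m-1}$ vertices is forced to be an $(m-1)$-face. The central claim --- that all the ``bridging'' edges $f(v,0)f(v,1)$ flip the same coordinate --- is handled correctly: the image of a square $2$-face of $I^m$ is a simple $4$-cycle by injectivity, and in a $4$-cycle of a hypercube opposite edges flip the same coordinate (since the four flipped coordinates must cancel in pairs, and adjacent edges cannot flip the same coordinate or two of the four vertices would coincide). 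Connectivity of $G(I^{m-1})$ then gives a global $i^\ast$. Your deduction that $i^\ast$ is constrained on both $F_0$ and $F_1$ with opposite values is right --- if $i^\ast$ were free on $F_0$ then $f(v,1) \in F_0 \cap F_1$ --- and since the bijections $v \mapsto f(v,\epsilon)$ project to the same map after forgetting coordinate $i^\ast$, $F_0$ and $F_1$ indeed have a common projection $\bar F$, so $F_0 \cup F_1$ is the $m$-face over $\bar F$ with $i^\ast$ free. This is a clean inductive argument, and the point you flag --- that the only essentially non-formal step is the constancy of $i^\ast$, reducing to the analysis of images of $2$-faces plus connectivity --- is exactly where the cubical combinatorics enters.
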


\begin{crl} \label{thm:cube-is-flag}
A cubical complex $\Squelta$ embeds into $I^n$ if and only if its underlying graph $\mathcal G(\Squelta)$ embeds into $\mathcal G(I^n)$.
\end{crl}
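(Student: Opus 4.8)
The forward implication needs no work: an embedding of $\Squelta$ into $I^n$ sends vertices to vertices and edges to edges, injectively, so it restricts to an embedding of $G(\Squelta)$ into $G(I^n)$. For the converse my plan is to show that a graph embedding $f \colon G(\Squelta) \hookrightarrow G(I^n)$ is automatically the vertex-level data of a cubical embedding, by running \cref{thm:Ehrenborg-Hetyei} one face at a time.

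Fix a face $F$ of $\Squelta$, say $F \cong I^m$. Distinct vertices of a single face of a cubical complex are never identified, and two vertices of $F$ are joined by an edge of $\Squelta$ only if they are joined by an edge of $F$ (such an edge, being the intersection of an edge of $\Squelta$ with $F$, is a subface of $F$); hence the induced subgraph of $G(\Squelta)$ on $V(F)$ is exactly $G(I^m)$. Thus $f$ restricts to an embedding of $G(I^m)$ into $G(I^n)$, and \cref{thm:Ehrenborg-Hetyei} says that $f(V(F))$ is a face of $I^n$. Comparing vertex counts, $\abs{f(V(F))} = 2^m$, so this face is $m$-dimensional, and $f$ restricts to a graph isomorphism from $G(I^m)$ onto the graph of the $m$-cube $f(V(F))$. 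Since an isomorphism of cube graphs is induced by a unique isomorphism of cubes, $f$ restricts to a combinatorial isomorphism $F \cong f(V(F))$.

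Setting $\iota(F) := f(V(F))$ then gives a map from the faces of $\Squelta$ to the faces of $I^n$ that is injective (a face is determined by its vertex set, and $f$ is injective on vertices), dimension-preserving, and inclusion-preserving. Because $\Squelta$ is closed under passing to subfaces and each restriction $\iota|_F$ is a cube isomorphism, the image $\iota(\Squelta)$ is closed under subfaces, hence is a subcomplex of $I^n$; and $\iota$ is an inclusion-preserving bijection onto it whose inverse also preserves inclusions (both face posets being finite), i.e.\ an isomorphism of cubical complexes. This exhibits $\Squelta$ as a subcomplex of $I^n$.

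I expect the only real obstacle to be the appeal to \cref{thm:Ehrenborg-Hetyei}: it is not at all obvious that a graph embedding of a cube into a cube must have a face for its image, and that rigidity is exactly what the Ehrenborg--Hetyei lemma supplies. Everything downstream — that a face of a cubical complex is determined by its vertices, that the complex is closed under subfaces, that intersections of faces are faces, and that cube-graph isomorphisms come from cube isomorphisms — is routine bookkeeping, which I would keep brief.
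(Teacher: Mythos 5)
Your proof is correct and is precisely the argument the paper intends: the corollary is stated immediately after \cref{thm:Ehrenborg-Hetyei} with no separate proof, and the implicit derivation is exactly your face-by-face application of that lemma (plus the routine observation that a cubical embedding restricts to a graph embedding for the forward direction). Note that once Ehrenborg--Hetyei gives that $f(V(F))$ is a face of $I^n$ for every face $F$ of $\Squelta$, the definition of ``embedding of cubical complexes'' is already satisfied, so the closing paragraph about $\iota$ being an isomorphism onto a subcomplex, while true, is more than is needed.
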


So the question of whether a complex can embed into a cube is a graph-theoretic question --- which is answered by the following theorem:

\begin{thm}[{\citet[Proposition~1]{art:Havel-Moravek}}]
A connected graph $\mathcal H$ can be embedded in $\mathcal G(I^n)$ if and only if there exists a labelling of the edges of $\mathcal H$ with labels $\{1, \dotsc, n\}$ such that:
\begin{itemize}
	\item in every path in $\mathcal H$, some label appears an odd number of times, and
	\item in every cycle in $\mathcal H$, no label appears an odd number of times.
\end{itemize}
\end{thm}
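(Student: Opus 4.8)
The plan is to prove both implications by translating between two kinds of data on $H$: labellings of the \emph{vertices} by binary strings, and labellings of the \emph{edges} by coordinates. The dictionary is that in $G(I^n)$, whose vertex set is $\{0,1\}^n$, an edge joins two strings differing in exactly one coordinate; so to any embedding we attach the edge-labelling recording, for each edge, the coordinate in which its two endpoints' images differ, and conversely from a labelling satisfying the two bulleted conditions we will reconstruct an embedding one coordinate at a time.

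For the forward direction, suppose $\phi \colon V(H) \hookrightarrow \{0,1\}^n$ is a graph embedding, and label each edge $uv$ of $H$ by the unique coordinate $i$ in which $\phi(u)$ and $\phi(v)$ differ. Along any walk from $u$ to $v$, the $i$-th coordinate of $\phi$ flips exactly at the edges labelled $i$, so the number of such edges has parity $\phi(u)_i \oplus \phi(v)_i$. If the walk is a path then $u \neq v$, hence $\phi(u) \neq \phi(v)$ by injectivity and some coordinate $i$ has $\phi(u)_i \neq \phi(v)_i$, i.e.\ label $i$ occurs an odd number of times; if the walk is a cycle then $u = v$, so every label occurs an even number of times.

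For the backward direction, assume an edge-labelling with the two stated properties is given. Fix a base vertex $v_0$ (here is where connectedness of $H$ enters), and define $\phi(v) \in \{0,1\}^n$ by letting $\phi(v)_i$ be the parity of the number of edges labelled $i$ on some path from $v_0$ to $v$. Three points need checking. (i) \emph{Well-definedness:} two paths $P, P'$ from $v_0$ to $v$ have symmetric difference $P \mathbin{\triangle} P'$ an even subgraph, hence a union of edge-disjoint cycles; by hypothesis every label occurs evenly in each such cycle, hence evenly in $P \mathbin{\triangle} P'$, hence with equal parity in $P$ and in $P'$. (ii) \emph{Edges go to edges:} if $uv \in E(H)$ has label $i$, appending this edge to a path from $v_0$ to $u$ changes only the label-$i$ parity, so $\phi(u)$ and $\phi(v)$ differ in exactly coordinate $i$. (iii) \emph{Injectivity:} for $u \neq v$, a path from $u$ to $v$ has some label $i$ occurring oddly, and by the computation in (ii) this forces $\phi(u)_i \neq \phi(v)_i$.

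The main obstacle is step (i): it is precisely where the cycle condition is used, and it relies on the standard fact that an even subgraph (every vertex of even degree) is a disjoint union of edge sets of cycles, together with the observation that ``number of edges bearing a fixed label, reduced mod $2$'' is additive over edge-disjoint unions. The remaining steps are routine bookkeeping, with connectedness invoked only to ensure every vertex is joined to $v_0$.
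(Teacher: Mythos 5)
The paper does not prove this theorem at all: it is stated with a citation to \citet[Proposition~1]{art:Havel-Moravek} and used as a black box, so there is no in-paper argument to compare against. Your proof supplies the missing (standard) argument and is essentially correct: translate an embedding into an edge-labelling by recording which coordinate flips across each edge, use injectivity for the path condition and the telescoping cancellation around a closed walk for the cycle condition; conversely, fix a base vertex and integrate label parities along paths to define the coordinates, with the cycle condition guaranteeing well-definedness via the even-subgraph decomposition of $P \mathbin{\triangle} P'$, and the path condition guaranteeing injectivity. One small point to tighten in step (ii): appending $uv$ to a path $P$ from $v_0$ to $u$ yields a walk, not necessarily a path, from $v_0$ to $v$ (since $v$ may already lie on $P$), so it is not immediate that the resulting parities compute $\phi(v)$. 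Either note that walks and paths give the same label parities (repeatedly excise cycles, each preserving parities by hypothesis), or argue as in (i) that $P \mathbin{\triangle} \{uv\} \mathbin{\triangle} P''$ is an even subgraph for any path $P''$ from $v_0$ to $v$; the same remark covers the invocation of (ii) inside (iii). This is a routine repair and the overall argument is sound.
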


Note that this criterion is similar to a condition given by \citet[Theorem~2]{art:Dolbilin-et-al2} for a cubical manifold to be embeddable in the standard grid of unit cubes in $\mathbb R^n$.

\subsection{Acknowledgments}

This research was partially supported by a graduate fellowship from NSF grant DMS-1664865.

We thank Steven Klee, Margaret Bayer, Raman Sanyal, the anonymous reviewers, and especially Isabella Novik for many helpful suggestions.

\section{Definitions}

Cubical complexes are a variation on the well-studied notion of simplicial complexes, where instead of simplices, we use hypercubes (or ``cubes'' for short).

\begin{dfn}
A \emph{cubical complex} consists of a finite \emph{vertex set} $V$ and a collection $\Squelta$ of subsets of $V$, called \emph{faces}, satisfying the following conditions:
\begin{itemize}
	\item $\emptyset$ is \emph{not} in $\Squelta$;
	\item for each $v \in V$, $\{v\}$ is in $\Squelta$;
	\item for each $F \in \Squelta$, the set $\hat F \coloneqq \{G \in \Squelta : G \subseteq F\}$ is isomorphic (as a poset ordered by inclusion) to the poset of non-empty faces of a cube; and
	\item if $F$ and $G$ are in $\Squelta$, $F \cap G$ is either empty or in $\Squelta$.
\end{itemize}
\end{dfn}

We usually abuse notation and just write $\Squelta$ (pronounced ``square'') to denote the cubical complex, instead of $(V, \Squelta)$. We will typically use capital Greek letters with horizontal and vertical lines to denote cubical complexes, e.g.\ $\Squelta, \Gamma, \Pi, \Xi, \dotsc$, and other capital Greek letters for simplicial complexes, e.g.\ $\Delta, \Lambda, \dotsc$.

The choice of whether to include or exclude the empty set in the definition of $\Squelta$ is arbitrary, and many authors use the opposite convention, for which the word ``non-empty'' should also be removed from the third condition. We make the choice to exclude the empty set because the set of non-empty faces of a cube is slightly simpler to describe than the set of all faces.

This definition should seem reminiscent of the definition of a simplicial complex. The only differences are the exclusion of the empty set, the fourth condition (which is redundant in the simplicial version), and ``non-empty faces of a cube'' should be replaced by ``faces of a simplex'' in the third condition: the face poset of a simplex is a Boolean lattice, isomorphic to the power set of $\{1, \dotsc, n\}$.

Much of the terminology for simplicial complexes carries over to cubical complexes as well. The \emph{dimension} of a face $F$, denoted $\dim F$, is the dimension of the cube whose face poset $\hat F$ is isomorphic to, and the dimension of a complex is the maximum of the dimensions of its faces. The \emph{$k$-skeleton} of $\Squelta$ is the complex $\Skel_k \Squelta$ whose faces are $\{F \in \Squelta : \dim F \leq k\}$. The $1$-skeleton of $\Squelta$ is also called its \emph{graph}, $\mathcal G(\Squelta)$, and the vertex set is sometimes denoted $V(\Squelta)$.

If $\Pi$ and $\Xi$ are any cubical complexes, define $\Pi \times \Xi$ to be the complex with vertex set $V(\Pi) \times V(\Xi)$ and face set $\{f \times g : f \in \Pi, g \in \Xi\}$.

The prototypical example of a cubical complex is the \emph{unit interval} $I \coloneqq \big\{\{0\}, \{1\}, \{0,1\}\big\}$. This lets us write the poset of non-empty faces of the $n$-dimensional cube as $I^n = I \times \dotsb \times I$. Define $I^0$ to be the complex with a single vertex. We will sometimes use the alternative notation $I \coloneqq \{0, 1, *\}$, where $0 \preceq *$ and $1 \preceq *$ but $0$ and $1$ are incomparable. In this notation, $I^n$ has vertex set $\{0,1\}^n$ and face set $\{0,1,*\}^n$, with the partial order where $(p_1, \dotsc, p_n) \preceq^n (q_1, \dotsc, q_n)$ if and only if $p_i \preceq q_i$ for all $i$. The \emph{boundary} of a cube, denoted $\partial I^n$, is the complex with face set $I^n \setminus \{(*, \dotsc, *)\}$.

A \emph{map of cubical complexes} is a function $f : V(\Pi) \to V(\Xi)$ such that the image of any face of $\Pi$ is a face of $\Xi$. If $f$ is injective, the map is called an \emph{embedding}; if $f$ is bijective and $f^{-1}$ is also a map of cubical complexes, we say $f$ is an \emph{isomorphism}, and we write $\Pi \cong \Xi$. A \emph{subcomplex} of $\Squelta$ consists of a subset $W \subseteq V$ and a subset $\Gamma \subseteq \Squelta$ such that $(W, \Gamma)$ is a cubical complex. In this situation, define $\Squelta \setminus \Gamma$ to be the complex whose face set is $\{F \in \Squelta : {F \cap V(\Gamma) = \emptyset}\}$; in other words, $\Squelta \setminus \Gamma$ is the complex obtained by deleting all vertices of $\Gamma$ from $\Squelta$, along with all faces containing them. We say that $\Gamma$ is a \emph{full} subcomplex of $\Squelta$ (sometimes called an ``induced subcomplex'') if it contains every face of $\Squelta$ involving only vertices in $W$.

Any cubical complex can be given a geometric realisation as a CW complex. If $\Squelta$ is a cubical complex, construct the geometric realisation $\abs{\Squelta}$ by taking the disjoint union of one copy of $[0,1]^k$ for each $k$-dimensional face of $\Squelta$, identifying the subfaces $f \subseteq F$ with subfaces of the associated cube in agreement with the cubical structure, and for each inclusion $f \subseteq F$ of faces, glueing the cubes for $f$ and $F$ along the appropriate face. The geometric realisation of the complex $I^n$ can thus be identified with the usual cube $[0,1]^n \subseteq \mathbb R^n$. Note that $\abs{\Pi \times \Xi}$ is homeomorphic to $\abs{\Pi} \times \abs{\Xi}$. 

Using the CW structure of a cubical complex, we can compute its homology and cohomology groups using cellular homology and cohomology, denoted $H_i(\Squelta)$ and $H^i(\Squelta)$ respectively. Crucially for this paper, the cellular homology groups of a CW complex agree with the singular homology of the complex as a topological space --- that is, $H_i(\Squelta) \cong H_i(\abs{\Squelta})$ --- and the $i$th cellular homology can be computed from just the $j$-skeleton when $i < j$ --- that is, $H_i(\Squelta) \cong H_i(\Skel_j \Squelta)$ --- and similar statements for cohomology \cite[Theorem~2.35, Lemma~2.34(c) and Theorem~3.5]{book:Hatcher}. In this paper, if no coefficients are written, assume that all coefficients of homology are $\mathbb Z/2 \mathbb Z$ (although many of the results can be easily modified for any sensible choices of coefficients), and we only consider non-reduced homology.

We say that a cubical complex $\Mu$ of dimension $d$ is a \emph{cubical homology manifold} if $\abs{\Mu}$ is homeomorphic to a homology manifold (without boundary), that is, for every point $p \in \abs{\Mu}$, the relative homology $H_i(\abs{\Mu}, \abs{\Mu} \setminus p; \mathbb Z)$ is $\mathbb Z$ if $i = d$ and trivial otherwise.

\section{The main result}

The argument in Dancis's paper begins with some preliminary facts about simplicial complexes, then proceeds to a mostly topological proof. The topological reasoning applies to cubical complexes without much modification, but the beginning of the argument needs some changes before it will work in the cubical world.

\subsection{Cubical modifications}

First, Dancis uses the following lemma:
\begin{lma}[{see \citep[lemma 70.1]{book:Munkres}}]
If $\Delta$ is a simplicial complex and $\Lambda$ is a full (sometimes called ``induced'') subcomplex, then $\abs{\Delta} \setminus \abs{\Lambda}$ and $\abs{\Delta \setminus \Lambda}$ are homotopy equivalent.
\end{lma}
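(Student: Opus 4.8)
The plan is to show that $\abs{\Delta \setminus \Lambda}$ is a deformation retract of $\abs{\Delta} \setminus \abs{\Lambda}$, which immediately gives the homotopy equivalence. I would work in barycentric coordinates: each point $x \in \abs{\Delta}$ lies in the relative interior of a unique face (its \emph{carrier}), and so carries well-defined coordinates $(t_v(x))_{v \in V(\Delta)}$ with $t_v(x) \geq 0$, $\sum_v t_v(x) = 1$, and $\{v : t_v(x) > 0\}$ equal to the vertex set of the carrier; the functions $t_v$ are continuous on $\abs{\Delta}$.

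First I would record the two facts that make the argument run. Since $\Lambda$ is \emph{full}, a face of $\Delta$ lies in $\Lambda$ exactly when all of its vertices do; hence $x \in \abs{\Lambda}$ iff $t_v(x) = 0$ for every $v \notin V(\Lambda)$, i.e.\ $x \notin \abs{\Lambda}$ iff $t_v(x) > 0$ for some $v \notin V(\Lambda)$. Directly from the definition of $\Delta \setminus \Lambda$, one has $x \in \abs{\Delta \setminus \Lambda}$ iff $t_v(x) = 0$ for every $v \in V(\Lambda)$.

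Next I would define the retraction. For $x \in \abs{\Delta} \setminus \abs{\Lambda}$ put $A(x) = \sum_{v \in V(\Lambda)} t_v(x)$; this is continuous and, by the first fact, satisfies $A(x) < 1$. Let $\rho(x)$ be the point with coordinates $t_v(\rho(x)) = 0$ for $v \in V(\Lambda)$ and $t_v(\rho(x)) = t_v(x)/(1 - A(x))$ otherwise. This really does name a point of $\abs{\Delta}$, because $\{v \notin V(\Lambda) : t_v(x) > 0\}$ is a subface of the carrier of $x$ and hence a face of $\Delta$; and by the second fact $\rho(x) \in \abs{\Delta \setminus \Lambda}$, with $\rho$ equal to the identity there (where $A \equiv 0$). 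Continuity of $\rho$ follows from continuity of the $t_v$ and of $1/(1-A)$. Then the homotopy $H(x,s)$ given by $t_v(H(x,s)) = (1-s)\,t_v(x) + s\,t_v(\rho(x))$ is the straight-line homotopy inside the (convex) closed carrier simplex of $x$, so it stays in $\abs{\Delta}$; and the total weight on vertices outside $V(\Lambda)$ at time $s$ is $(1-s)(1-A(x)) + s \geq 1 - A(x) > 0$, so $H(x,s)$ never meets $\abs{\Lambda}$. Thus $H$ deformation retracts $\abs{\Delta} \setminus \abs{\Lambda}$ onto $\abs{\Delta \setminus \Lambda}$.

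The only genuine subtleties are the appeal to fullness in the first fact --- without it $\abs{\Lambda}$ need not be a union of carriers and the whole picture collapses --- and the weight bookkeeping showing that $H$ stays inside $\abs{\Delta}$ while avoiding $\abs{\Lambda}$; I do not expect either to be a real obstacle. (For an infinite complex one would additionally check continuity of $\rho$ and $H$ simplex by simplex against the coherent topology, but the complexes relevant to this paper are finite, so this is automatic.)
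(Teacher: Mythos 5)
Your argument is correct, and it is essentially the proof Munkres gives for his Lemma~70.1; the paper does not reprove this simplicial lemma, it simply cites it. The one point worth drawing out is that your proof leans hard on the \emph{global} barycentric-coordinate structure of $\abs{\Delta}$: the quantity $A(x) = \sum_{v \in V(\Lambda)} t_v(x)$ is a single continuous function on all of $\abs{\Delta}$, the renormalisation $t_v \mapsto t_v/(1-A)$ makes sense everywhere at once, and the straight-line homotopy is therefore automatically globally coherent. This is exactly what breaks for cubical complexes, and explains why the paper's cubical analogue, \cref{thm:cubical-deletion-homotopy}, cannot copy your argument verbatim: there is no canonical system of barycentric-type coordinates on a cubical complex, so the paper instead constructs the deformation retraction one face at a time (rescaling $y \mapsto y/\max_i y_i$ inside each cube $\abs{\hat F}$) and then separately verifies that the face-by-face homotopies agree on overlaps. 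Your bookkeeping --- that the total weight off $V(\Lambda)$ stays at least $1 - A(x) > 0$ throughout the homotopy, and that the support of $\rho(x)$ is genuinely a face of $\Delta \setminus \Lambda$ --- is precisely the content that has to be re-established locally in the cubical setting, and you have handled it cleanly in the simplicial one.
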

To make this lemma work in the cubical world, we need a suitable analogue of fullness.

\begin{dfn}
Let $\Squelta$ be a cubical complex. A subcomplex $\Gamma \subseteq \Squelta$ is \emph{face-like} if for every face $F \in \Squelta$, the intersection $V(\Gamma) \cap F$ is empty or a face of $\Gamma$.
\end{dfn}

For example, for any face $G$ of a cubical complex, the subcomplex $\hat G$ induced by $G$ is a face-like subcomplex (hence the name).

The analogous condition for simplicial complexes is equivalent to fullness. For cubical complexes, face-like-ness implies fullness, but the reverse implication is not true: for example, if $\Squelta$ is a solid square and $\Gamma$ is a pair of diagonally opposite vertices, then $\Gamma$ is a full subcomplex but not face-like, since the intersection of $\Gamma$ with the whole square is not a face.

\begin{prop} \label{thm:cubical-deletion-homotopy}
Let $\Squelta$ be a cubical complex and $\Gamma \subseteq \Squelta$ a face-like subcomplex. Then $\abs{\Squelta} \setminus \abs{\Gamma}$ is homotopy equivalent to $\abs{\Squelta \setminus \Gamma}$.
\end{prop}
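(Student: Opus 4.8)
The plan is to construct an explicit strong deformation retraction of $\abs{\Squelta} \setminus \abs{\Gamma}$ onto $\abs{\Squelta \setminus \Gamma}$, assembled from local retractions on the individual faces, in direct analogy with the standard proof of the simplicial lemma quoted above. The preliminary step is to identify, for each face $F \in \Squelta$, the sets $\abs{\Gamma} \cap \abs F$ and $\abs{\Squelta \setminus \Gamma} \cap \abs F$. Because $\Gamma$ is face-like, $G_F \coloneqq F \cap V(\Gamma)$ is either empty or a face of $\Gamma$, hence in the latter case a face of $\Squelta$ contained in $F$, i.e.\ a subcube of $\hat F$. Decomposing $\abs{\Squelta}$ into the relative interiors of its faces, any point of $\abs{\Gamma} \cap \abs F$ lies in the relative interior of some face of $\Gamma$ that is contained in $F$, and such a face is contained in $G_F$; thus $\abs{\Gamma} \cap \abs F = \abs{G_F}$, and likewise the faces of $F$ lying in $\Squelta \setminus \Gamma$ are exactly the faces of $\hat F$ disjoint from $G_F$. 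This is the point at which face-like-ness, rather than mere fullness, is indispensable: were $F \cap V(\Gamma)$ allowed to be an arbitrary vertex subset of $F$ --- two antipodal vertices of a square, say --- then $\abs F \setminus \abs{G_F}$ would not retract onto anything matching up with $\Squelta \setminus \Gamma$, and the proposition would be false, exactly as in the square example above.

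Next I would establish the local model. If $G_F = \emptyset$ there is nothing to do, as then $\abs F \subseteq \abs{\Squelta \setminus \Gamma}$; if $G_F = F$ then $F \in \Gamma$ and $\abs F$ misses $\abs{\Squelta} \setminus \abs{\Gamma}$ entirely; otherwise identify $\abs F$ with $[0,1]^k$ so that $\abs{G_F} = \{x : x_i = \epsilon_i \text{ for } i \in A\}$ for some nonempty $A \subseteq \{1, \dotsc, k\}$ and $\epsilon \in \{0,1\}^A$. Set $m_F(x) \coloneqq \max_{i \in A} \abs{x_i - \epsilon_i}$, which is positive precisely for $x \notin \abs{G_F}$, and define $\rho_F(x)_i \coloneqq \epsilon_i + (x_i - \epsilon_i)/m_F(x)$ for $i \in A$ and $\rho_F(x)_i \coloneqq x_i$ otherwise --- radial projection away from $\abs{G_F}$ in the constrained coordinates. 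Writing $\abs F \setminus \abs{G_F} \cong \big([0,1]^A \setminus \{\epsilon\}\big) \times [0,1]^{\{1, \dotsc, k\} \setminus A}$, one checks that $\rho_F$ carries $\abs F \setminus \abs{G_F}$ onto the union of the faces of $\hat F$ disjoint from $G_F$, fixes that union, and that the straight-line homotopy $H^F_t(x) \coloneqq (1-t)x + t\rho_F(x)$ stays inside $\abs F \setminus \abs{G_F}$ and fixes the same union; this is just the familiar fact that a punctured cube strong-deformation-retracts onto the union of its faces avoiding the puncture.

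The crux --- and the step I expect to be most delicate --- is verifying that the $\rho_F$ and $H^F_t$ agree on overlaps $\abs F \cap \abs{F'} = \abs{F \cap F'}$, so that they glue to globally defined maps. It is enough to show that for every subcube $C'$ of $\hat F$, the restriction of $\rho_F$ to $\abs{C'}$ equals the retraction attached to the pair $\big(C',\, C' \cap V(\Gamma)\big)$, where an empty intersection is read as the identity map. This reduces to a brief coordinate check: along those coordinates that are constrained on $C'$ and lie in $A$, the normalising factor $m_F$ is identically $1$ --- if $C'$ meets $G_F$ these coordinates equal $\epsilon$ and contribute $0$ to the maximum taken over the remaining ones, whereas if $C'$ avoids $G_F$ then some such coordinate disagrees with $\epsilon_i$ and already pins $m_F$ to $1$ --- so on $\abs{C'}$ the formula for $\rho_F$ specialises to exactly the retraction built from $(C', C' \cap V(\Gamma))$, the identity when that set is empty. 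Applying this with $C' = F \cap F'$ yields the required compatibility. Since $\abs{\Squelta}$ is a finite CW complex and $\{ \abs F \cap (\abs{\Squelta} \setminus \abs{\Gamma}) : F \in \Squelta \}$ is a finite cover of $\abs{\Squelta} \setminus \abs{\Gamma}$ by relatively closed sets, the pasting lemma produces continuous global maps $\rho$ and $H$; the local descriptions show that $H$ stays within $\abs{\Squelta} \setminus \abs{\Gamma}$, that the image of $\rho$ is exactly $\abs{\Squelta \setminus \Gamma}$ (considering faces $F$ that already lie in $\Squelta \setminus \Gamma$), and that $H$ fixes $\abs{\Squelta \setminus \Gamma}$. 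Hence $\abs{\Squelta} \setminus \abs{\Gamma}$ strong-deformation-retracts onto $\abs{\Squelta \setminus \Gamma}$, giving in particular the asserted homotopy equivalence.
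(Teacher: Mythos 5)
Your proposal is correct and proceeds by essentially the same route as the paper. Your local retraction $\rho_F(x)_i = \epsilon_i + (x_i-\epsilon_i)/m_F(x)$ with $m_F(x) = \max_{i\in A}\abs{x_i-\epsilon_i}$ is, after normalising $\epsilon=0$ by cube symmetry, exactly the paper's $H_F\big(1,(x,y)\big) = \big(x,\, y/\max_i y_i\big)$, and your overlap verification is the same case split on whether the subface $f=F\cap F'$ meets $G_F$.
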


\begin{figure}
\caption{An example of the deformation retraction in \cref{thm:cubical-deletion-homotopy}. The subcomplex $\Gamma$ to be deleted is shown with dashed lines in \cref{fig:homotopy-1}.} \centering
\begin{subfigure}{0.25\textwidth}
\caption{} \centering \label{fig:homotopy-1}
\begin{tikzpicture}[x={(1.0cm,0.1cm)}, y={(0cm,1cm)}, z={(0.4cm,-0.6cm)}]
\filldraw [face, thin, opaque, draw=none] (0,1,1) -- (1,1,1) -- (1,2,1) -- (0,2,1) -- cycle;
\filldraw [face, thin, opaque] (0,0,0) -- (1,0,0) -- (1,0,1) -- (0,0,1) -- cycle;
\filldraw [face, thin, opaque] (0,0,0) -- (0,1,0) -- (0,1,1) -- (0,0,1) -- cycle;
\filldraw [face, thin, opaque] (0,1,0) -- (0,2,0) -- (0,2,1) -- (0,1,1) -- cycle;
\filldraw [face, thin, opaque] (0,0,1) -- (1,0,1) -- (1,1,1) -- (0,1,1) -- cycle;
\filldraw [face, thin, opaque] (0,2,0) -- (1,2,0) -- (1,2,1) -- (0,2,1) -- cycle;

\node [vertex, inner sep=1.4pt, fill=white, draw=black] (v1) at (1,1,1) {};
\node [vertex, inner sep=1.4pt, fill=white, draw=black] (v2) at (1,2,1) {};
\draw [very thick, dashed] (v1) -- (v2);
\end{tikzpicture}
\end{subfigure} $\rightarrow$
\begin{subfigure}{0.25\textwidth}
\caption{} \centering
\begin{tikzpicture}[x={(1.0cm,0.1cm)}, y={(0cm,1cm)}, z={(0.4cm,-0.6cm)}]
\filldraw [face, thin, opaque] (1/2,1/2,1/2) -- (1,1/2,1/2) -- (1,1,1/2) -- (1/2,1,1/2) -- cycle;
\filldraw [face, thin, opaque] (1/2,1,1/2) -- (1,1,1/2) -- (1,2,1/2) -- (1/2,2,1/2) -- cycle;
\filldraw [face, thin, opaque] (1/2,1/2,1/2) -- (1,1/2,1/2) -- (1,1/2,1) -- (1/2,1/2,1) -- cycle;
\filldraw [face, thin, opaque] (0,0,0) -- (0,1,0) -- (0,1,1) -- (0,0,1) -- cycle;
\filldraw [face, thin, opaque] (0,1,0) -- (0,2,0) -- (0,2,1) -- (0,1,1) -- cycle;
\filldraw [face, thin, opaque] (0,0,1) -- (1,0,1) -- (1,1/2,1) -- (1/2,1/2,1) -- (1/2,1,1) -- (0,1,1) -- cycle;
\filldraw [face, thin, opaque] (0,1,1) -- (1/2,1,1) -- (1/2,2,1) -- (0,2,1) -- cycle;
\filldraw [face, thin, opaque] (0,2,0) -- (1,2,0) -- (1,2,1/2) -- (1/2,2,1/2) -- (1/2,2,1) -- (0,2,1) -- cycle;
\end{tikzpicture}
\end{subfigure} $\rightarrow$
\begin{subfigure}{0.25\textwidth}
\caption{} \centering
\begin{tikzpicture}[x={(1.0cm,0.1cm)}, y={(0cm,1cm)}, z={(0.4cm,-0.6cm)}]
\filldraw [face, thin, opaque] (0,0,0) -- (1,0,0) -- (1,1,0) -- (0,1,0) -- cycle;
\filldraw [face, thin, opaque] (0,1,0) -- (1,1,0) -- (1,2,0) -- (0,2,0) -- cycle;
\filldraw [face, thin, opaque] (0,0,0) -- (1,0,0) -- (1,0,1) -- (0,0,1) -- cycle;
\filldraw [face, thin, opaque] (0,0,0) -- (0,1,0) -- (0,1,1) -- (0,0,1) -- cycle;
\filldraw [face, thin, opaque] (0,1,0) -- (0,2,0) -- (0,2,1) -- (0,1,1) -- cycle;
\end{tikzpicture}
\end{subfigure}
\end{figure}

\begin{proof}
We will prove the statement by first constructing a strong deformation retraction $H_F$ from $\abs{\hat F} \setminus \abs{\Gamma \cap \hat F}$ to $\abs{\hat F \setminus (\Gamma \cap \hat F)}$ for each face $F$ of $\Squelta$, and then observing that wherever faces $F$ and $F'$ overlap, the deformation retractions $H_F$ and $H_{F'}$ agree.

Let $G = V(\Gamma) \cap F$, so $\hat G = \Gamma \cap \hat F$. Since $\Gamma$ is face-like, $G$ is either empty or a face of $\Gamma$, and thus of $F$. If $G$ is empty or $G = F$, then $\abs{\hat F} \setminus \abs{\hat G} = \abs{\hat F \setminus \hat G}$, so we can take $H_F$ to be the constant homotopy. Otherwise, by exploiting the bountiful symmetry of the cube, we may write $\hat F = I^r$ and assume $\hat G$ is the subcomplex $I^k \times 0^{r-k}$. Then
\begin{align*}
\abs{\hat F} \setminus \abs{\hat G} & = \abs{I^r} \setminus \abs{I^k \times 0^{r-k}} \\
& = \abs{I^k} \times (\abs{I^{r-k}} \setminus \abs{0}), \\
\shortintertext{and}
\abs{\hat F \setminus \hat G} & = \abs{I^r \setminus (I^k \times 0^{r-k})} \\
& = \abs{I^k} \times \abs{I^{r-k} \setminus 0}.
\end{align*}
The space $\abs{I^{r-k}} \setminus \abs{0}$ is a topological cube with a single corner point removed, while $\abs{I^{r-k} \setminus 0}$ is a cube where every face containing that corner is removed, which is the subset of the cube consisting of points where some coordinate is $1$.

Now, define a strong deformation retraction $H_F$ from $\abs{I^k} \times (\abs{I^{r-k}} \setminus \abs{0})$ to $\abs{I^k} \times \abs{I^{r-k} \setminus 0}$ by
\begin{align*}
H_F \big( t, (x, y) \big) & \coloneqq \bigg( x, \, (1-t) y + \frac{t y}{\max\limits_{i = 1, \dotsc, r-k} y_i} \bigg).
\end{align*}
Note that $H_F\big(0, (x,y)\big) = (x,y)$ and $H_F\big(1, (x,y)\big) = \big( x, \frac{y}{\max y_i} \big)$, which is in $\abs{I^{r-k} \setminus 0}$ since some coordinate of $\frac{y}{\max y_i}$ is $1$. We leave it to the reader to check the remaining details that this is a well defined deformation retraction.

Now that we have constructed homotopies on the faces $F$ of $\Squelta$, it remains to argue that they agree where faces overlap. By the definition of a cubical complex, the intersection of any two overlapping faces $F$ and $F'$ is a sub-face $f$ of each. Think of $\hat F$ as $I^r = \{0,1,*\}^r$, so the face $F$ is $(*, \dotsc, *)$, and $G$ is $(*, \dotsc, *, 0, \dotsc, 0)$, with $*$ appearing $k$ times. Thus the set of subfaces of $F$ that meet $\Gamma$ is $\{0,1,*\}^k \times \{0,*\}^{r-k}$.

Suppose $f$ is one such face, so all of the last $r-k$ coordinates of $f$ are $0$ or $*$, and let $\iota$ be the inclusion $\abs{\hat f} \hookrightarrow \abs{\hat F}$. The homotopy $H_F$ restricted to $\abs{\hat f}$ is constantly zero in the coordinates where $f$ is $0$, so $\left. H_F \right\vert_{\abs{\hat f}}$ agrees with $\iota \circ H_f$. On the other hand, if $f$ is a subface of $F$ that does not meet $\Gamma$, at least one of the last $(r-k)$ coordinates of $f$ must be $1$, so
\begin{align*}
(1-t)y + \frac{ty}{\max_i y_i} & = (1-t)y + \frac{ty}{1} \\
& = y.
\end{align*}
Thus $H_f$ and the restriction of $H_F$ are both the constant homotopy on $\abs{\hat f}$, so they also agree.

Therefore, for any faces $F$ and $F'$, $H_F$ and $H_{F'}$ agree on their intersection $F \cap F' = f$. Thus the homotopies glue together to form a deformation retraction from $\abs{\Squelta} \setminus \abs{\Gamma}$ to $\abs{\Squelta \setminus \Gamma}$.
\end{proof}

The second modification we make to Dancis's argument is the following lemma, which explains why we only consider subcomplexes of the cube --- this lemma is not true for arbitrary cubical complexes.

\begin{lma} \label{thm:facelike-iff-not-boundary}
Suppose $\Squelta$ is a subcomplex of $I^n$, and $S^k$ is a subcomplex of $\Squelta$ isomorphic to $\partial I^{k+1}$, with $k \geq 1$. Then $S^k$ is a face-like subcomplex of $\Squelta$ if and only if $S^k$ is not the boundary of a $(k+1)$-dimensional face of $\Squelta$, that is, $S^k$ is not the complex $\partial \hat F$ for any $F \in \Squelta$.
\end{lma}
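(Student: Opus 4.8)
The plan is to prove both directions using the geometric/combinatorial rigidity of subcomplexes of the cube, which is exactly what Corollary~\ref{thm:cube-is-flag} (via Lemma~\ref{thm:Ehrenborg-Hetyei}) provides. For the easy direction, suppose $S^k = \partial\hat F$ for some $(k+1)$-dimensional face $F \in \Squelta$. Then $V(S^k) = V(F)$, and for \emph{any} face $G \in \Squelta$, the intersection $G \cap V(F)$ is either empty or a face of $\Squelta$ contained in $F$ (by the intersection axiom for cubical complexes); since it is a proper subface of $F$ unless $G = F$, in every case it is a face of $S^k = \partial\hat F$, or it equals $V(F)$ itself — wait, that last case ($G = F$) gives $G \cap V(F) = V(F) = F \notin S^k$, so $S^k$ is \emph{not} face-like. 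Good: that is the direction we want, $S^k$ being a boundary implies $S^k$ is \emph{not} face-like.

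For the harder direction, suppose $S^k$ is \emph{not} face-like; we must produce a face $F \in \Squelta$ with $S^k = \partial\hat F$. Non-face-likeness means there is some $G \in \Squelta$ with $G \cap V(S^k)$ nonempty but not a face of $S^k$. The key step is to understand what $V(S^k)$ looks like inside $V(I^n)$: since $S^k \cong \partial I^{k+1}$, its graph $G(S^k)$ is the graph of $\partial I^{k+1}$, which is the same as $G(I^{k+1})$. By Corollary~\ref{thm:cube-is-flag} applied to the inclusion $S^k \hookrightarrow \Squelta \hookrightarrow I^n$, the vertex set $V(S^k)$ spans a face of $I^n$ — call it $F_0$, an honest $(k+1)$-cube in $I^n$ with $V(F_0) = V(S^k)$. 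Now the offending face $G$ has $G \cap V(S^k) = G \cap V(F_0)$; since $G$ is a face of $I^n$ and $F_0$ is a face of $I^n$, their intersection $G \cap V(F_0)$ is a face of $I^n$ (faces of a cube are closed under intersection), hence a subcube of $F_0$. For this to fail to be a face of $S^k = \partial\hat F_0 \cap \Squelta$... here is where I expect the real work: I want to conclude that $G \cap V(F_0) = V(F_0)$, i.e.\ $F_0 \subseteq G$, so that $F_0$ itself, being a face of $\Squelta$ (it is $G$ or a subface determined by $V(F_0)$ inside the face $G$ of $\Squelta$), is the sought-after $(k+1)$-face $F$ with $\partial\hat F = S^k$.

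To make that last step rigorous: the faces of $S^k$ are precisely the proper subcubes of $F_0$ that actually lie in $\Squelta$ — and since $S^k \cong \partial I^{k+1}$ is the \emph{entire} boundary, \emph{every} proper subcube of $F_0$ is a face of $\Squelta$. Therefore $G \cap V(F_0)$, being a subcube of $F_0$ that is a face of $\Squelta$ (it equals $G$ if $G \subseteq F_0$, or is a face of $\Squelta$ by the intersection axiom applied to $G$ and any facet of $F_0$ containing it, etc.), is a face of $S^k$ \emph{unless} it is all of $F_0$. Hence the only way $S^k$ can fail to be face-like is if some $G \in \Squelta$ has $V(F_0) \subseteq G$; then the face of $\Squelta$ spanned by $V(F_0)$ inside $\hat G$ is a $(k+1)$-dimensional face $F$ of $\Squelta$ with $V(F) = V(F_0) = V(S^k)$, and since $\hat F \cong I^{k+1}$ while $S^k \cong \partial I^{k+1}$ has the same vertex set and the same subcubes except $F$ itself, we get $S^k = \partial\hat F$. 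The main obstacle is the careful bookkeeping of ``subcube of $F_0$ vs.\ face of $\Squelta$ vs.\ face of $S^k$'' and making sure the intersection axiom is applied correctly to identify $G \cap V(F_0)$ as a genuine face of $\Squelta$ rather than merely a subset of vertices; this is where the hypothesis $\Squelta \subseteq I^n$ (so that $V(S^k)$ is forced to span a cube) is essential and cannot be dropped.
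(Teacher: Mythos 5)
Your proof is correct and uses essentially the same argument as the paper: both hinge on \cref{thm:Ehrenborg-Hetyei} to identify $V(S^k)$ with a face $F_0$ of $I^n$ and then analyze intersections of faces of $\Squelta$ with $V(F_0)$ inside the cube. The only difference is presentational --- you prove the contrapositive (``not face-like $\Rightarrow$ is a boundary'') while the paper proves the direct implication (``not a boundary $\Rightarrow$ face-like''), and the step identifying the face of $I^n$ should be attributed to \cref{thm:Ehrenborg-Hetyei} rather than \cref{thm:cube-is-flag}, but the substance is the same.
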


\begin{proof}
If $S^k$ is the boundary of a face $F \in \Squelta$, then $V(S^k) \cap F = V(S^k)$ is not a face of $S^k$, so $S^k$ is not face-like.

On the other hand, even if $S^k$ is not the boundary of any face of $\Squelta$, \cref{thm:Ehrenborg-Hetyei} implies that it is still the boundary of a face $F$ of $I^n$. Suppose $G \in I^n$ is any face. Since $S^k$ and $\hat F$ only differ in the face $F$, we have either $G \cap S^k = S^k$, if $G$ contains $F$, or $G \cap S^k = G \cap F$, which is a face of $S^k$. Hence if $\Squelta$ does not contain $F$, and thus does not contain any face containing $F$, then $S^k$ is a face-like subcomplex of $\Squelta$.
\end{proof}

\subsection{Topological theorems}

From this point on, Dancis's argument is mostly topological, so it carries over almost unchanged for cubical complexes. We restate Dancis's argument here, starting with the following version of Poincar\'e duality (see \citep[Theorem~70.2]{book:Munkres}). Remember that we assume the coefficients of homology are $\mathbb Z/2 \mathbb Z$ if none are written.

\begin{thm}[Poincar\'e duality] \label{thm:Poincare}
If $\Mu$ is a $d$-dimensional cubical homology manifold and $\Gamma$ is a subcomplex, then $H_{j}(\abs{\Mu}, \abs{\Mu} \setminus \abs{\Gamma}) \cong H^{d-j}(\abs{\Gamma})$. If $\Mu$ is orientable, the same holds with coefficients in $\mathbb Z$.
\end{thm}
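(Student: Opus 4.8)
The plan is to reduce the statement to the simplicial Poincar\'e duality theorem, \citet[Theorem~70.2]{book:Munkres}, by passing to a triangulation. Since $\Mu$ is a subcomplex of $I^n$, I would fix the standard ``staircase'' triangulation of $I^n$ determined by the coordinate order on $\{1, \dotsc, n\}$: this subdivides every face of the cube compatibly with face inclusions, so restricting it to $\Mu$ yields a simplicial complex $K$ together with a homeomorphism $\abs{K} \to \abs{\Mu}$ that is affine on each simplex. Because $\Gamma$ is a union of cubes of $\Mu$, the simplices of $K$ contained in $\abs{\Gamma}$ form a subcomplex $L \subseteq K$ whose realisation is identified with $\abs{\Gamma}$. (One could equally use the order complex of the face poset of $\Mu$, i.e.\ its barycentric subdivision, which requires no ambient cube.)

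Next I would check that the hypotheses transport along this homeomorphism. Being a homology $d$-manifold without boundary is a property of the space $\abs{\Mu} \cong \abs{K}$ alone, so $\abs{K}$ is one too (equivalently, $K$ is a simplicial homology $d$-manifold); likewise orientability of $\abs{\Mu}$ gives orientability of $\abs{K}$. Moreover cubical homology, simplicial homology, and singular homology all agree on the spaces in sight, so $H_j(\abs{\Mu}, \abs{\Mu} \setminus \abs{\Gamma}) = H_j(\abs{K}, \abs{K} \setminus \abs{L})$ and $H^{d-j}(\abs{\Gamma}) = H^{d-j}(\abs{L}) = H^{d-j}(L)$.

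Finally I would apply the simplicial version of the theorem to the pair $(\abs{K}, \abs{L})$, obtaining $H_j(\abs{K}, \abs{K} \setminus \abs{L}) \cong H^{d-j}(\abs{L})$ with $\mathbb Z/2$ coefficients, and with $\mathbb Z$ coefficients when $\abs{K}$ is orientable; if that theorem is phrased using \v{C}ech cohomology of a general closed subset, one notes that $\abs{L}$ is a compact polyhedron, so its \v{C}ech and singular cohomologies coincide. Composing the identifications of the previous paragraph then gives $H_j(\abs{\Mu}, \abs{\Mu} \setminus \abs{\Gamma}) \cong H^{d-j}(\abs{\Gamma})$, the orientable case included. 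If a later step requires it, one records in addition that this isomorphism is cap product with the fundamental class, hence natural with respect to inclusions of subcomplexes. The content here is purely the bookkeeping of the reduction --- producing the compatible triangulation, verifying that the manifold condition and orientability are topological and survive subdivision, and matching up the three (co)homology theories --- and I expect that routine bookkeeping, not any fresh topological input, to be the only obstacle.
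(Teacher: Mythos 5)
The paper does not prove this theorem; it simply states it with a citation to Munkres Theorem~70.2, and your proposal correctly supplies the routine reduction the paper leaves implicit (triangulate compatibly, transport the homology-manifold and orientability hypotheses along the homeomorphism, identify cubical, simplicial, and singular (co)homology, apply the simplicial duality theorem). One small caution: the hypothesis here does \emph{not} assume $\Mu \subseteq I^n$, so the staircase triangulation is not always available and you should lead with the barycentric-subdivision alternative you mention only parenthetically.
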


\begin{lma}[{cf.\ \citep[Lemma~6(b) and Lemma~8]{art:Dancis}}] \label{thm:Sk-homology}
Let $\Mu \subseteq I^n$ be a $d$-dimensional cubical homology manifold, and $S^k$ a subcomplex of $\Mu$ isomorphic to $\partial I^{k+1}$ with $k \geq 2$.
\begin{itemize}
	\item If $S^k$ is the boundary of a face in $\Mu$, then $H_j(\Mu \setminus S^k) \cong H_j(\Mu)$ for all $j \leq d-2$.
	\item If $S^k$ is not the boundary of any face of $\Mu$, then
	\begin{itemize}
		\item $H_j(\Mu \setminus S^k) \cong H_j(\Mu)$ when $j \leq d-2$ and $j \not \in \{d-k, d-k-1\}$, and
		\item $H_j(\Mu \setminus S^k) \not \cong H_j(\Mu)$ for some $j \in \{d-k, d-k-1\}$.
	\end{itemize}
\end{itemize}
\end{lma}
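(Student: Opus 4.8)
The plan is to turn the statement about cubical homology into a topological computation and then run the long exact sequence of a pair together with Poincaré duality, following Dancis. \textbf{Reduction step.} Suppose first that $S^k$ is \emph{not} the boundary of a face of $\Mu$. Then by \cref{thm:facelike-iff-not-boundary} it is a face-like subcomplex of $\Mu$, so \cref{thm:cubical-deletion-homotopy} gives a homotopy equivalence $\abs{\Mu} \setminus \abs{S^k} \simeq \abs{\Mu \setminus S^k}$, whence $H_j(\Mu \setminus S^k) \cong H_j(\abs{\Mu} \setminus \abs{S^k})$. If instead $S^k = \partial\hat F$ for some $F \in \Mu$, I would work with $\Gamma = \hat F$ in place of $S^k$: the subcomplex $\hat F$ is always face-like, and since $k \geq 1$ every vertex of $I^{k+1}$ lies on its boundary, so $V(\hat F) = V(S^k)$ and hence $\Mu \setminus \hat F = \Mu \setminus S^k$ as cubical complexes; \cref{thm:cubical-deletion-homotopy} then gives $H_j(\Mu \setminus S^k) \cong H_j(\abs{\Mu} \setminus \abs{\hat F})$. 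So in either case it suffices to compute $H_j(\abs{\Mu} \setminus \abs{\Gamma})$ for $\Gamma$ equal to $\hat F$ or to $S^k$.

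Next I would feed the pair $(\abs{\Mu}, \abs{\Mu} \setminus \abs{\Gamma})$ into its long exact sequence
\[
\cdots \to H_{j+1}(\abs{\Mu}, \abs{\Mu}\setminus\abs{\Gamma}) \to H_j(\abs{\Mu}\setminus\abs{\Gamma}) \to H_j(\abs{\Mu}) \to H_j(\abs{\Mu}, \abs{\Mu}\setminus\abs{\Gamma}) \to \cdots
\]
and rewrite each relative group via Poincaré duality (\cref{thm:Poincare}) as $H_j(\abs{\Mu}, \abs{\Mu}\setminus\abs{\Gamma}) \cong H^{d-j}(\abs{\Gamma})$. When $\Gamma = \hat F$, the space $\abs{\hat F}$ is a contractible cube, so $H^{d-j}(\abs{\hat F})$ vanishes for $j \neq d$; hence for every $j \leq d-2$ both $H_j$ and $H_{j+1}$ of the pair vanish, and exactness forces $H_j(\abs{\Mu}\setminus\abs{\hat F}) \cong H_j(\abs{\Mu})$, which is the first bullet. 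When $\Gamma = S^k$, the space $\abs{S^k}$ is a topological $k$-sphere, so over $\mathbb{Z}/2$ (using $k \geq 1$) the group $H^{d-j}(\abs{S^k})$ is $\mathbb{Z}/2$ exactly when $d-j \in \{0, k\}$ and $0$ otherwise; therefore $H_j$ and $H_{j+1}$ of the pair both vanish whenever $j \leq d-2$ and $j \notin \{d-k, d-k-1\}$, giving $H_j(\abs{\Mu}\setminus\abs{S^k}) \cong H_j(\abs{\Mu})$ and hence the first sub-bullet.

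For the second sub-bullet I would isolate the portion of the long exact sequence around degree $a \coloneqq d-k$ (note $0 \le a \le d$ since $S^k \subseteq \Mu$ forces $k \le d$). This is where the hypothesis $k \geq 2$ is used: it guarantees $1 \leq k-1 < k$, so $H_{a+1}(\abs{\Mu},\abs{\Mu}\setminus\abs{S^k}) \cong H^{k-1}(\abs{S^k}) = 0$, while also $H_{a-1}(\abs{\Mu},\abs{\Mu}\setminus\abs{S^k}) \cong H^{k+1}(\abs{S^k}) = 0$ and $H_a(\abs{\Mu},\abs{\Mu}\setminus\abs{S^k}) \cong H^{k}(\abs{S^k}) \cong \mathbb{Z}/2$. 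The long exact sequence thus collapses to the exact sequence of finite-dimensional $\mathbb{Z}/2$-vector spaces
\[
0 \to H_a(\abs{\Mu}\setminus\abs{S^k}) \to H_a(\abs{\Mu}) \to \mathbb{Z}/2 \to H_{a-1}(\abs{\Mu}\setminus\abs{S^k}) \to H_{a-1}(\abs{\Mu}) \to 0,
\]
with the convention that the two degree-$(-1)$ terms are dropped when $a = 0$ (i.e.\ $k = d$). Since the alternating sum of dimensions in a finite exact sequence is zero, one gets $\dim H_a(\abs{\Mu}) + \dim H_{a-1}(\abs{\Mu}\setminus\abs{S^k}) = \dim H_a(\abs{\Mu}\setminus\abs{S^k}) + \dim H_{a-1}(\abs{\Mu}) + 1$, so $H_j(\abs{\Mu}\setminus\abs{S^k})$ and $H_j(\abs{\Mu})$ cannot be isomorphic for \emph{both} $j = a$ and $j = a-1$. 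Translating back through the reduction step, this gives $H_j(\Mu\setminus S^k) \not\cong H_j(\Mu)$ for some $j \in \{d-k, d-k-1\}$, as required.

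I expect the only genuinely delicate point to be this last step — upgrading ``the homology might change'' to ``the homology does change'' — which is what necessitates the dimension count on the truncated exact sequence and the separate treatment of the edge case $a = 0$ where the degree-$(-1)$ term disappears. Everything else (the two homotopy equivalences supplied by \cref{thm:cubical-deletion-homotopy}, the identity $V(\hat F) = V(S^k)$, and reading off the $\mathbb{Z}/2$-cohomology of a cube and of a sphere) is routine once the long exact sequence and Poincaré duality are set up.
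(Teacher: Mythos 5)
Your proposal is correct and follows essentially the same path as the paper: identify the relevant subcomplex ($\hat F$ or $S^k$), verify it is face-like (trivially for $\hat F$, via \cref{thm:facelike-iff-not-boundary} for $S^k$), apply \cref{thm:cubical-deletion-homotopy} and Poincar\'e duality (\cref{thm:Poincare}) to compute the relative homology of the pair, and then read off the conclusion from the long exact sequence. The only places you go slightly beyond the paper are in spelling out the alternating-sum dimension count that justifies ``some $j \in \{d-k, d-k-1\}$ has $H_j(\Mu\setminus S^k) \not\cong H_j(\Mu)$'' (the paper asserts this directly from the displayed exact sequence without comment) and in flagging the degenerate case $d-k = 0$; both are welcome clarifications but do not change the argument.
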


\begin{proof}
First, suppose $S^k$ is the boundary of a face $F$. Consider the long exact sequence of the pair $(\Mu, \Mu \setminus \hat F)$:
\begin{multline*}
\begin{tikzcd}[column sep=small, ampersand replacement=\&]
\dotsb \rar \& H_{j+1}(\Mu, \Mu \setminus \hat F) \rar \& H_{j}(\Mu \setminus \hat F) \rar \& {}
\end{tikzcd} \\
\begin{tikzcd}[column sep=small, ampersand replacement=\&]
{} \rar \& H_{j}(\Mu) \rar \& H_{j}(\Mu, \Mu \setminus \hat F) \rar \& \dotsb
\end{tikzcd}
\end{multline*}
Since $\hat F$ is a face-like subcomplex, \cref{thm:cubical-deletion-homotopy,thm:Poincare} imply that $H_j(\Mu, \Mu \setminus \hat F) \cong H^{d-j}(\hat F)$, which is $0$ when $j < d$. Therefore, away from $j=d$, the exact sequence breaks up into shorter exact sequences:
\begin{equation*}
\begin{tikzcd}[column sep=small]
0 \rar & H_j(\Mu \setminus \hat F) \rar & H_j(\Mu) \rar & 0
\end{tikzcd}
\end{equation*}
But $\Mu \setminus \hat F$ and $\Mu \setminus S^k$ are the same complex, so $H_j(\Mu \setminus S^k) = H_j(\Mu \setminus \hat F) \cong H_j(\Mu)$ when $j \leq d-2$.

Now, suppose $S^k$ is not the boundary of a face. Consider the pair $(\Mu, \Mu \setminus S^k)$:
\begin{multline*}
\begin{tikzcd}[column sep=small, ampersand replacement=\&]
\dotsb \rar \& H_{j+1}(\Mu, \Mu \setminus S^k) \rar \& H_{j}(\Mu \setminus S^k) \rar \& {}
\end{tikzcd} \\
\begin{tikzcd}[column sep=small, ampersand replacement=\&]
{} \rar \& H_{j}(\Mu) \rar \& H_{j}(\Mu, \Mu \setminus S^k) \rar \& \dotsb
\end{tikzcd}
\end{multline*}
In this case, \cref{thm:facelike-iff-not-boundary} says that $S^k$ is a face-like subcomplex of $\Mu$, so \cref{thm:cubical-deletion-homotopy,thm:Poincare} imply that $H_{j}(\Mu, \Mu \setminus S^k) \cong H^{d-j}(S^k)$, which is $\mathbb Z/2 \mathbb Z$ when $j = d-k$ or $j = d$, and $0$ otherwise. Therefore, away from $j = d-k$ and $j = d$, this exact sequence implies that $H_j(\Mu) \cong H_j(\Mu \setminus S^k)$. However, near $j = d-k$ we get the following exact sequence:
\begin{multline} \label{eq:les-facelike-badpart}
\begin{tikzcd}[column sep=small, ampersand replacement=\&]
0 \rar \& H_{d-k}(\Mu \setminus S^k) \rar \& H_{d-k}(\Mu) \rar \& \mathbb Z/2 \mathbb Z \rar \& {}
\end{tikzcd} \\
\begin{tikzcd}[column sep=small, ampersand replacement=\&]
{} \rar \& H_{d-k-1}(\Mu \setminus S^k) \rar \& H_{d-k-1}(\Mu) \rar \& 0
\end{tikzcd}
\end{multline}
So either $H_{d-k}(\Mu \setminus S^k) \not \cong H_{d-k}(\Mu)$ or $H_{d-k-1}(\Mu \setminus S^k) \not \cong H_{d-k-1}(\Mu)$.
\end{proof}

\begin{crl}[{cf.\ \citep[Main Lemma~10]{art:Dancis}}] \label{thm:face-criterion}
If $\Mu \subseteq I^n$ is a $d$-dimensional cubical homology manifold and $S^k \cong \partial I^{k+1}$ a subcomplex with $k \geq 2$, then $S^k$ is the boundary of a $(k+1)$-dimensional face of $\Mu$ if and only if $H_j(\Mu \setminus S^k) \cong H_j(\Mu)$ for both $j = d-k$ and $j = d-k-1$.
\end{crl}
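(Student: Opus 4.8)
The plan is to read this off directly from \cref{thm:Sk-homology}, whose two bullet points already handle the dichotomy ``$S^k$ is the boundary of a face of $\Mu$'' versus ``$S^k$ is not''. Since every subcomplex $S^k \cong \partial I^{k+1}$ falls into exactly one of these two cases, it suffices to check that the homological condition in the statement --- namely, $H_j(\Mu \setminus S^k) \cong H_j(\Mu)$ for both $j = d-k$ and $j = d-k-1$ --- holds in the first case and fails in the second.

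For the first case, suppose $S^k = \partial \hat F$ for some $(k+1)$-dimensional face $F \in \Mu$. The first bullet of \cref{thm:Sk-homology} gives $H_j(\Mu \setminus S^k) \cong H_j(\Mu)$ for every $j \leq d-2$. Since $k \geq 2$, both relevant indices satisfy $d - k \leq d - 2$ and $d - k - 1 \leq d - 3 < d - 2$, so the desired isomorphisms hold. (If either index is negative, both sides vanish and the isomorphism is trivially true, so nothing is lost in the degenerate range.)

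For the converse I would argue by contraposition. If $S^k$ is not the boundary of any face of $\Mu$, then the second bullet of \cref{thm:Sk-homology} produces some $j \in \{d-k, d-k-1\}$ for which $H_j(\Mu \setminus S^k) \not\cong H_j(\Mu)$; hence at least one of the two isomorphisms in the statement fails. Equivalently, if both isomorphisms hold then $S^k$ must be the boundary of a $(k+1)$-dimensional face. Combining the two directions gives the stated equivalence.

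There is essentially no obstacle here beyond bookkeeping: all the topological content --- the deformation retraction of \cref{thm:cubical-deletion-homotopy}, the characterisation of face-likeness in \cref{thm:facelike-iff-not-boundary}, and the long exact sequence together with Poincar\'e duality --- has already been carried out inside \cref{thm:Sk-homology}. The only point requiring a moment's care is confirming that the two degrees $d-k$ and $d-k-1$ are precisely the ``bad'' degrees flagged in the second bullet and simultaneously lie within the range $j \leq d-2$ covered by the first bullet; the hypothesis $k \geq 2$ is exactly what makes this work.
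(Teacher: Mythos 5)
Your proof is correct, and it is essentially the paper's own argument: the paper states this corollary with no written proof at all, treating it as an immediate consequence of \cref{thm:Sk-homology}, and your write-up simply makes the routine case-split and index-bookkeeping explicit. The observation that $k \geq 2$ is precisely what guarantees $d-k$ and $d-k-1$ both land in the range $j \leq d-2$ is exactly the point being used.
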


And since the homology groups $H_{d-k}(\Mu)$, $H_{d-k-1}(\Mu)$, $H_{d-k}(\Mu \setminus S^k)$ and $H_{d-k-1}(\Mu \setminus S^k)$ can be computed from the $k$-skeleton of $\Mu$ when $d-k < k$, we conclude:
\begin{thm}[{cf.\ \citep[Theorem~11]{art:Dancis}}] \label{thm:main}
Any $d$-dimensional cubical homology manifold $\Mu \subseteq I^n$ can be reconstructed from its $\left( \left\lfloor \frac{d}{2} \right\rfloor + 1 \right)$-skeleton.
\end{thm}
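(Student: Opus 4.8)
The plan is to reconstruct $\Mu$ one skeleton at a time, starting from $\Skel_k \Mu$ with $k = \lfloor d/2 \rfloor + 1$ and inductively recovering $\Skel_{k+1}\Mu$, then $\Skel_{k+2}\Mu$, and so on up to $\Skel_d \Mu = \Mu$. The base case is given: we are handed the $k$-skeleton. For the inductive step, suppose $m \geq k$ and we have already determined $\Skel_m \Mu$ as an abstract cubical complex; we must decide, for each subcomplex $S^m \subseteq \Skel_m \Mu$ isomorphic to $\partial I^{m+1}$, whether $S^m$ bounds an $(m+1)$-face of $\Mu$. Every $(m+1)$-face of $\Mu$ does contribute such an $S^m$ (its boundary), and conversely the $(m+1)$-faces are exactly the ``genuine'' such $S^m$'s, so identifying them pins down $\Skel_{m+1}\Mu$.

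The tool for this is \cref{thm:face-criterion}: since $m \geq k \geq 2$ (using $d \geq 2$; the cases $d \leq 1$ are trivial since then $k \geq d$ already), $S^m$ bounds an $(m+1)$-face of $\Mu$ if and only if $H_j(\Mu \setminus S^m) \cong H_j(\Mu)$ for both $j = d-m$ and $j = d-m-1$. So I must argue that all four groups $H_{d-m}(\Mu)$, $H_{d-m-1}(\Mu)$, $H_{d-m}(\Mu \setminus S^m)$, $H_{d-m-1}(\Mu \setminus S^m)$ depend only on data we already have, namely $\Skel_m \Mu$. The standard fact is that the homology groups $H_j$ of any cell complex in degrees $j \leq \ell - 1$ are determined by the $\ell$-skeleton, since the cellular (here cubical) chain groups and boundary maps in degrees $\leq \ell$ involve only cells of dimension $\leq \ell$, and $H_j$ for $j \leq \ell-1$ is computed from $C_{j+1} \to C_j \to C_{j-1}$ with $j+1 \leq \ell$. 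Thus $H_j(\Mu)$ for $j \leq m-1$ is read off from $\Skel_m \Mu$; and $\Mu \setminus S^m$ is a subcomplex whose $m$-skeleton is a subcomplex of $\Skel_m \Mu$ that we can write down combinatorially (delete all faces meeting $V(S^m)$), so $H_j(\Mu \setminus S^m)$ for $j \leq m-1$ is likewise determined. It remains to check $d - m \leq m - 1$ and $d - m - 1 \leq m - 1$: the first gives $m \geq (d+1)/2$, i.e.\ $m \geq \lceil (d+1)/2 \rceil = \lfloor d/2 \rfloor + 1 = k$, which holds; the second is then automatic. Hence for every $m \geq k$ the criterion of \cref{thm:face-criterion} is evaluable from $\Skel_m \Mu$ alone, completing the inductive step.

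The one genuinely delicate point — and the step I expect to be the main obstacle to state cleanly — is making precise the claim that ``$\Mu \setminus S^m$ can be identified from $\Skel_m\Mu$'' in a way that is honestly independent of the unknown higher faces of $\Mu$. The subtlety is that $\Mu \setminus S^m$ as a \emph{complex} has faces of all dimensions, but we only ever need its homology in degrees $\leq m-1$, and for that we only need $\Skel_m(\Mu \setminus S^m)$; and $\Skel_m(\Mu \setminus S^m)$ equals the subcomplex of $\Skel_m \Mu$ consisting of those faces disjoint from $V(S^m)$, which is a purely combinatorial operation on the known complex $\Skel_m \Mu$. Once this bookkeeping is pinned down, the argument is just: run \cref{thm:face-criterion} against every $\partial I^{m+1}$-subcomplex of the known $m$-skeleton, collect the ones that pass, and declare their interiors to be the $(m+1)$-faces — this is well-defined and recovers $\Skel_{m+1}\Mu$ exactly. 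Iterating from $m = k$ to $m = d-1$ reconstructs $\Mu$, which is the theorem.
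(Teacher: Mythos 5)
Your proof is correct and follows the same route as the paper: build the $(m+1)$-skeleton from the $m$-skeleton by testing each $\partial I^{m+1}$-subcomplex against \cref{thm:face-criterion}, using the inequality $d-m \leq m-1$ to ensure the relevant homology groups of $\Mu$ and $\Mu \setminus S^m$ are computable from the $m$-skeleton. The paper states this more tersely (the computability claim and the iteration are asserted in a single sentence), while you spell out the induction and the bookkeeping about $\Skel_m(\Mu \setminus S^m)$ explicitly; there is no substantive difference.
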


\begin{proof}
Each $(k+1)$-dimensional face of $\Mu$ shows up in the $k$-skeleton as a subcomplex isomorphic to $\partial I^{k+1}$, so reconstructing the $(k+1)$-skeleton of $\Mu$ from its $k$-skeleton amounts to deciding which subcomplexes $S^k \cong \partial I^{k+1}$ are actually boundaries of faces. \Cref{thm:face-criterion} answers this question when $k \geq \left\lfloor \frac{d}{2} \right\rfloor + 1$.
\end{proof}

\subsection{Some improvements on \cref{thm:main}}

Under some mild assumptions (for example, if $\Mu$ is a sphere), we can tighten this result to the $\left\lceil \frac{d}{2} \right\rceil$-skeleton (which differs from \cref{thm:main} when $d$ is even).

\begin{lma}[{cf.\ \citep[Lemma~12]{art:Dancis}}] \label{thm:face-criterion-tighter}
Suppose $\Mu \subseteq I^n$ is a $d$-dimensional cubical homology manifold with $d = 2r \geq 4$, and either
\begin{itemize}
	\item $H_r(\Mu; \mathbb Z/2 \mathbb Z) = 0$, or
	\item $\Mu$ is orientable and $H_r(\Mu; \mathbb Z)$ is finite.
\end{itemize}
Then a subcomplex $S^r \cong \partial I^{r+1}$ bounds a face of $\Mu$ if and only if $H_{r-1}({\Mu \setminus S^r}) \cong H_{r-1}(\Mu)$ (with coefficients in $\mathbb Z/2 \mathbb Z$ in the first case or $\mathbb Z$ in the second).
\end{lma}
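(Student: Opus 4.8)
The plan is to run the same long-exact-sequence analysis as in the proof of \cref{thm:Sk-homology}, specialised to $k = r$ and $d = 2r$, and then use the extra hypothesis on $H_r(\Mu)$ to force the ``missing'' copy of the coefficient ring to appear in degree $r-1$ rather than degree $r$. Throughout, write $R$ for the coefficient ring ($\mathbb Z/2$ in the first case, $\mathbb Z$ in the second).

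The forward direction needs no hypothesis: if $S^r$ bounds an $(r+1)$-dimensional face of $\Mu$, then the first bullet of \cref{thm:Sk-homology} (with $k = r$) gives $H_j(\Mu \setminus S^r) \cong H_j(\Mu)$ for all $j \leq d-2$, and since $d = 2r$ with $r \geq 2$ we have $r - 1 \leq d - 2$, so in particular $H_{r-1}(\Mu \setminus S^r) \cong H_{r-1}(\Mu)$. For the converse, suppose $S^r$ does \emph{not} bound a face. Then \cref{thm:facelike-iff-not-boundary} makes $S^r$ a face-like subcomplex of $\Mu$, so \cref{thm:cubical-deletion-homotopy,thm:Poincare} identify $H_j(\Mu, \Mu \setminus S^r) \cong H^{d-j}(S^r)$ (with $\mathbb Z/2$ coefficients always, and with $\mathbb Z$ coefficients in the orientable case). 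As $\abs{S^r}$ is an $r$-sphere with $r \geq 2$, the groups $H^{d-j}(S^r)$ are nonzero only for $d - j \in \{0, r\}$, i.e. $j \in \{2r, r\}$; hence near degree $r$ the only nonvanishing relative group is $H_r(\Mu, \Mu \setminus S^r) \cong R$, while $H_{r+1}(\Mu, \Mu \setminus S^r) = H_{r-1}(\Mu, \Mu \setminus S^r) = 0$ (here $r \geq 2$ is what keeps degrees $0$, $r$, $2r$ apart). The relevant segment of the long exact sequence of the pair is therefore
\[ 0 \longrightarrow H_r(\Mu \setminus S^r) \longrightarrow H_r(\Mu) \xrightarrow{\ \varphi\ } R \longrightarrow H_{r-1}(\Mu \setminus S^r) \longrightarrow H_{r-1}(\Mu) \longrightarrow 0. \]

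It now suffices to show that $\varphi = 0$ in both cases: then the tail breaks off as a short exact sequence $0 \to R \to H_{r-1}(\Mu \setminus S^r) \to H_{r-1}(\Mu) \to 0$, which forces $H_{r-1}(\Mu \setminus S^r) \not\cong H_{r-1}(\Mu)$ — as $\mathbb Z/2$-vector spaces whose dimensions differ by $1$ in the first case, and as finitely generated abelian groups whose ranks differ by $1$ in the second. To see $\varphi = 0$: in the first case the source $H_r(\Mu; \mathbb Z/2)$ is $0$ by hypothesis, so there is nothing to check; in the second case $H_r(\Mu; \mathbb Z)$ is finite by hypothesis, so its image is a finite subgroup of $\mathbb Z$, hence trivial. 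I expect the only mildly delicate points to be the routine bookkeeping of which long-exact-sequence terms vanish (the inequality $r \geq 2$, equivalently $d \geq 4$, is used exactly here) and the invocation of the $\mathbb Z$-coefficient form of \cref{thm:Poincare}, which is why orientability is assumed in the second case; everything else is immediate from the two exact sequences above.
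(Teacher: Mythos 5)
Your proof is correct and takes essentially the same route as the paper's: specialise the long exact sequence from \cref{thm:Sk-homology} (equation \eqref{eq:les-facelike-badpart}) to $k = r$, use the hypothesis on $H_r(\Mu)$ to kill the map into the coefficient ring, and read off a short exact sequence $0 \to R \to H_{r-1}(\Mu \setminus S^r) \to H_{r-1}(\Mu) \to 0$ forcing the two groups apart. You spell out a few more details (the rank/dimension count, and the identification of which relative homology groups vanish) than the paper, but the argument is identical in substance.
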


\begin{proof}
In the first case, setting $k = r$ in \cref{eq:les-facelike-badpart} gives the following sequence:
\begin{equation*}
\begin{tikzcd}[column sep=small]
0 \rar & \mathbb Z/2 \mathbb Z \rar & H_{r-1}(\Mu \setminus S^r; \mathbb Z/2 \mathbb Z) \rar & H_{r-1}(\Mu; \mathbb Z/2 \mathbb Z) \rar & 0
\end{tikzcd}
\end{equation*}

In the second case, Poincar\'e duality holds with coefficients in $\mathbb Z$ for orientable manifolds, so we get this sequence:
\begin{equation*}
\begin{tikzcd}[column sep=small]
H_r(\Mu; \mathbb Z) \rar["\phi"] & \mathbb Z \rar & H_{r-1}(\Mu \setminus S^r; \mathbb Z) \rar & H_{r-1}(\Mu; \mathbb Z) \rar & 0
\end{tikzcd}
\end{equation*}
in which the map $\phi$ must be zero, since there are no other homomorphisms from a finite group to $\mathbb Z$.

In both cases, we conclude that $H_{r-1}(\Mu \setminus S^r) \not \cong H_{r-1}(\Mu)$ (with appropriate coefficients) whenever $S^r$ is not the boundary of a face in $\Mu$. Conversely, if $S^r$ is the boundary of a face, \cref{thm:Sk-homology} still says that $H_{r-1}(\Mu \setminus S^r; \mathbb Z/2 \mathbb Z) \cong H_{r-1}(\Mu; \mathbb Z/2 \mathbb Z)$, and a similar proof works with coefficients in $\mathbb Z$ when $\Mu$ is orientable.
\end{proof}

\begin{crl}[{cf.\ \citep[Theorem~13]{art:Dancis}}] \label{thm:main-tighter}
If $\Mu$ is a $2r$-dimensional cubical homology manifold satisfying the hypotheses of \cref{thm:face-criterion-tighter}, $\Mu$ is determined by its $r$-skeleton.
\end{crl}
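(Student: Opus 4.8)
The plan is to mirror the proof of \cref{thm:main}, but with \cref{thm:face-criterion-tighter} playing the role of \cref{thm:face-criterion}. The reconstruction proceeds skeleton by skeleton: given the $k$-skeleton of $\Mu$, each $(k+1)$-face appears as a subcomplex isomorphic to $\partial I^{k+1}$, so rebuilding the $(k+1)$-skeleton amounts to deciding which such subcomplexes $S^k$ actually bound a $(k+1)$-face. For $k \geq r+1 = \lfloor d/2 \rfloor + 1$ this is handled exactly as in \cref{thm:main} via \cref{thm:face-criterion}, since then $d - k < k$ and all the relevant homology groups are computable from the $k$-skeleton. The only new case is $k = r$, i.e.\ reconstructing the $(r+1)$-skeleton from the $r$-skeleton, and here the hypotheses of \cref{thm:face-criterion-tighter} apply (note $d = 2r \geq 4$ is exactly the case where \cref{thm:main}'s bound $\lfloor d/2 \rfloor + 1 = r+1$ exceeds $r$).

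For this case, I would invoke \cref{thm:face-criterion-tighter}: $S^r \cong \partial I^{r+1}$ bounds a face of $\Mu$ if and only if $H_{r-1}(\Mu \setminus S^r) \cong H_{r-1}(\Mu)$, with the appropriate coefficients. The key point is that both of these homology groups depend only on the $r$-skeleton of $\Mu$: $H_{r-1}(\Mu)$ is computed from the cubical chain complex in degrees $r-2, r-1, r$, all of which live in $\Skel_r \Mu$; and $\Mu \setminus S^r$ is a subcomplex of $\Skel_r\Mu$ once we pass to skeleta in low degrees, so $H_{r-1}(\Mu \setminus S^r)$ likewise only sees faces of dimension at most $r$. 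Hence the criterion can be evaluated knowing only $\Skel_r \Mu$, which lets us recover the set of $(r+1)$-faces, and then \cref{thm:main} (or rather its inductive engine) takes over to recover all higher faces.

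One point deserving care: applying \cref{thm:face-criterion-tighter} presupposes knowing the global invariant $H_r(\Mu;\mathbb Z/2)$ (resp.\ that $\Mu$ is orientable with $H_r(\Mu;\mathbb Z)$ finite) — but these are part of the standing hypotheses on $\Mu$, not something we must extract from the skeleton, so there is no circularity; the statement "in the situation of \cref{thm:face-criterion-tighter}" already grants them. A second subtlety, shared with the proof of \cref{thm:main}, is that we are reconstructing the poset of faces, so we should note that once the vertex set, the graph, and the full list of faces of each dimension are known, the cubical complex itself is determined, since a cube face is determined by its vertex set. The main obstacle is simply making precise the assertion that $H_{r-1}(\Mu \setminus S^r)$ and $H_{r-1}(\Mu)$ are determined by $\Skel_r \Mu$; this is routine once one observes that the $(r-1)$st homology of any complex is a function of its $r$-skeleton, and that the combinatorial operation $\Mu \mapsto \Mu \setminus S^r$ commutes with truncation to the $r$-skeleton in the range of degrees that matters.

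\begin{proof}
As in the proof of \cref{thm:main}, reconstructing $\Mu$ from its $r$-skeleton means reconstructing, one dimension at a time, the $(k+1)$-skeleton from the $k$-skeleton for each $k \geq r$; and this in turn amounts to deciding which subcomplexes $S^k \cong \partial I^{k+1}$ of $\Skel_k \Mu$ are boundaries of $(k+1)$-dimensional faces of $\Mu$. For $k \geq r+1$ we have $d - k \leq r - 1 < k$, so the four groups $H_{d-k}(\Mu)$, $H_{d-k-1}(\Mu)$, $H_{d-k}(\Mu \setminus S^k)$, $H_{d-k-1}(\Mu \setminus S^k)$ are all determined by $\Skel_k \Mu$, and \cref{thm:face-criterion} settles the question. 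It remains to treat $k = r$.

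Since $d = 2r \geq 4$, the hypotheses of \cref{thm:face-criterion-tighter} are in force, so a subcomplex $S^r \cong \partial I^{r+1}$ of $\Skel_r \Mu$ bounds a face of $\Mu$ if and only if $H_{r-1}(\Mu \setminus S^r) \cong H_{r-1}(\Mu)$, with coefficients in $\mathbb Z/2$ in the first case and $\mathbb Z$ in the second. The $(r-1)$st homology of any cubical complex depends only on its chain groups in degrees $r-2$, $r-1$ and $r$, hence only on its $r$-skeleton; applying this both to $\Mu$ and to $\Mu \setminus S^r$ (whose $r$-skeleton is $(\Skel_r \Mu) \setminus S^r$), we see that both sides of the criterion are computable from $\Skel_r \Mu$ alone. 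The orientability of $\Mu$ and the finiteness or vanishing of $H_r(\Mu)$ are part of the given data about $\Mu$, so there is no circularity. Thus we can determine exactly which such $S^r$ bound faces, which recovers the set of $(r+1)$-dimensional faces of $\Mu$, i.e.\ $\Skel_{r+1}\Mu$.

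Having reconstructed $\Skel_{r+1}\Mu$, the argument of \cref{thm:main} applies verbatim for all $k \geq r+1$ to reconstruct $\Skel_{k+1}\Mu$ from $\Skel_k\Mu$, and hence recovers every face of $\Mu$. Finally, a face of $I^n$ is determined by its vertex set, so $\Mu$ as a subcomplex of $I^n$ is determined by the list of its faces; therefore $\Mu$ is determined by $\Skel_r\Mu$.
\end{proof}
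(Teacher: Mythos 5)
Your proof is correct and takes the approach the paper intends: the paper gives no explicit proof for this corollary, treating it as immediate from \cref{thm:face-criterion-tighter} in exact analogy with how \cref{thm:main} follows from \cref{thm:face-criterion}, and your argument (use \cref{thm:face-criterion-tighter} for the step $k=r$, then \cref{thm:face-criterion} for $k \geq r+1$, noting all relevant homology groups are visible in the $k$-skeleton) is precisely that analogy spelled out.
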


\Cref{thm:main,thm:main-tighter} let us reconstruct manifolds when the dimension is known. \Cref{thm:main-dimensions} will allow some ambiguity in the dimension. But first, a lemma:

\begin{lma} \label{thm:invariance-of-domain}
If $\Mu$ is a connected, $d$-dimensional cubical homology manifold with $d \geq 2$ and $S^d$ is a subcomplex homeomorphic to a $d$-dimensional sphere, then $S^d = \Mu$.
\end{lma}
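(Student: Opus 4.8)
This lemma is a combinatorial avatar of invariance of domain, and the plan is to prove it by a face-counting argument whose only non-formal ingredient is the \emph{strong connectivity} of a connected cubical homology manifold. First I would record the standard local structure of a cubical complex: if $p$ lies in the relative interior of a face $F$ of dimension $e$ in a cubical complex $\Xi$, then a neighbourhood of $p$ in $\abs{\Xi}$ is homeomorphic to $\mathbb{R}^e \times \mathrm{cone}(\abs{\link_\Xi F})$, so $H_i(\abs{\Xi}, \abs{\Xi} \setminus p) \cong \widetilde H_{i-e-1}(\abs{\link_\Xi F})$. Since $\abs{S^d}$ is a $d$-sphere, $S^d$ is itself a $d$-dimensional cubical homology manifold, so applying this to $\Xi = \Mu$ and to $\Xi = S^d$ shows that the link of every face $F$ is a $\mathbb{Z}/2$-homology sphere of dimension $d - \dim F - 1$. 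In particular this link is non-empty whenever $\dim F < d$, so $\Mu$ and $S^d$ are both pure $d$-dimensional, and --- because the link of a codimension-one face is a discrete set of points, one for each $d$-face above it --- every $(d-1)$-face of $\Mu$ (resp.\ of $S^d$) is contained in exactly two $d$-faces of $\Mu$ (resp.\ of $S^d$).

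Next I would introduce the \emph{dual graph} of $\Mu$: its vertices are the $d$-faces of $\Mu$, with an edge joining two $d$-faces that share a $(d-1)$-face. Let $\mathcal D$ be the set of $d$-faces of $\Mu$ that lie in $S^d$ (these really are $d$-faces of $\Mu$ because $S^d$ is a subcomplex); since $S^d$ is non-empty and pure $d$-dimensional, $\mathcal D \neq \emptyset$. I claim $\mathcal D$ is a union of connected components of the dual graph. Indeed, if $F \in \mathcal D$ and $F'$ is a $d$-face of $\Mu$ adjacent to $F$ across a $(d-1)$-face $G$, then $G \subseteq F \in S^d$, so $G$ is a $(d-1)$-face of $S^d$; it lies in exactly two $d$-faces of $S^d$, one being $F$ and the other a $d$-face of $\Mu$ containing $G$ and distinct from $F$, hence equal to $F'$; thus $F' \in \mathcal D$.

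The main obstacle is then to show the dual graph of $\Mu$ is connected. Here I would argue by contradiction: suppose the $d$-faces split as a disjoint union $C \sqcup C'$ with $C$ a non-empty proper union of dual-graph components, let $\Mu_C$ and $\Mu_{C'}$ be the subcomplexes generated by $C$ and $C'$, and set $\Delta = \Mu_C \cap \Mu_{C'}$. Purity gives $\Mu = \Mu_C \cup \Mu_{C'}$, and since each $(d-1)$-face of $\Mu$ lies in exactly two $d$-faces --- which are adjacent, hence in the same component --- no $(d-1)$-face lies in $\Delta$, so $\dim\Delta \leq d-2$. Now \cref{thm:Poincare} yields $H_1(\abs{\Mu}, \abs{\Mu}\setminus\abs{\Delta}) \cong H^{d-1}(\abs{\Delta}) = 0$, the last equality because $\abs{\Delta}$ has dimension at most $d-2$; combining this with $\widetilde H_0(\abs{\Mu}) = 0$ (as $\Mu$ is connected) in the long exact sequence of the pair $(\abs{\Mu}, \abs{\Mu}\setminus\abs{\Delta})$ forces $\widetilde H_0(\abs{\Mu}\setminus\abs{\Delta}) = 0$, so $\abs{\Mu}\setminus\abs{\Delta}$ is connected. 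But $\abs{\Mu}\setminus\abs{\Delta}$ is the disjoint union of the relatively open sets $\abs{\Mu_C}\setminus\abs{\Delta}$ and $\abs{\Mu_{C'}}\setminus\abs{\Delta}$, both non-empty because $\dim\Mu_C = \dim\Mu_{C'} = d > \dim\Delta$, a contradiction. So the dual graph is connected.

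Finally, combining connectedness of the dual graph with $\mathcal D \neq \emptyset$, every $d$-face of $\Mu$ belongs to $\mathcal D$, i.e.\ to $S^d$; and by purity of $\Mu$ every face of $\Mu$ lies in some $d$-face, which together with all its subfaces lies in the subcomplex $S^d$. Hence $\Mu \subseteq S^d$, so $S^d = \Mu$. I expect the two steps requiring genuine topological input to be the local-homology computation of the links (and the resulting purity and codimension-one count) and the strong-connectivity step; the Poincar\'e duality input of \cref{thm:Poincare} is precisely what lets the latter go through with machinery already in hand.
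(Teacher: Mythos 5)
Your proof is correct, but it takes a genuinely different and considerably longer route than the paper's. The paper's argument is a two-line exact-sequence computation: it considers the long exact sequence of the pair $(\Mu, S^d)$ in $\mathbb Z/2$ homology, uses $H_{d+1}(\Mu, S^d) = 0$ (dimension), $H_d(S^d) \cong H_d(\Mu) \cong \mathbb Z/2$ (closed connected $d$-manifolds), $H_{d-1}(S^d) = 0$, and the duality isomorphism $H_d(\Mu, S^d) \cong H^0(\Mu \setminus S^d)$ to conclude $H^0(\Mu \setminus S^d) = 0$, hence $\Mu \setminus S^d = \emptyset$ and $S^d = \Mu$. Your approach instead establishes a purely combinatorial structure theorem --- that a connected cubical homology manifold is a \emph{strongly connected pseudomanifold} (pure, with every $(d-1)$-face in exactly two $d$-faces, and the dual graph connected) --- and deduces the lemma from it by showing that the $d$-faces of $S^d$ form a non-empty union of dual-graph components. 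Both proofs invoke \cref{thm:Poincare} as the key topological input, but you apply it to the putative separating subcomplex $\Delta = \Mu_C \cap \Mu_{C'}$ of dimension $\leq d-2$, whereas the paper applies it directly to $S^d$. Your argument is more work (it establishes an intermediate structure result about homology manifolds that the paper never needs to state), but it makes the mechanism of the conclusion more transparent --- and the strong-connectivity fact is independently useful. One minor presentational caveat: your final step passes from ``every $d$-face of $\Mu$ lies in $S^d$'' to ``$\Mu \subseteq S^d$'' via purity; you might note explicitly that this needs purity of $\Mu$, which you did establish from the link computation, so no gap --- just worth flagging the dependence.
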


\begin{proof}
The pair $(\Mu, S^d)$ gives the following long exact sequence:
\begin{equation*}
\begin{tikzcd}[column sep=small, row sep=tiny]
H_{d+1}(\Mu, S^d) \arrow[d, phantom, "=" rotate=-90] \rar & H_d(S^d) \arrow[d, phantom, "=" rotate=-90] \rar & H_d(\Mu) \arrow[d, phantom, "=" rotate=-90] \rar & H_d(\Mu, S^d) \arrow[d, phantom, "\cong" rotate=-90] \rar & H_{d-1}(S^d) \arrow[d, phantom, "=" rotate=-90] \\
0 & \mathbb Z/2 \mathbb Z & \mathbb Z/2 \mathbb Z & H^0(\Mu \setminus S^d) & 0
\end{tikzcd}
\end{equation*}
Thus $H^0(\Mu \setminus S^d) = 0$, so $S^d = \Mu$.
\end{proof}

\begin{thm}[{cf.\ \citep[Theorem~14]{art:Dancis}}] \label{thm:main-dimensions}
Let $\Mu$ and $\Nu$ be cubical homology manifolds that are each subcomplexes of a cube. Let $m = \dim \Mu$ and $n = \dim \Nu$, and assume $m \geq n \geq 3$. Suppose $k$ is an integer with
\begin{itemize}
	\item $k \geq \left\lfloor \frac{m}{2} \right\rfloor + 1$; or
	\item $k = \frac{m}{2}$ and $H_k(\Mu; \mathbb Z/2 \mathbb Z) = H_k(\Nu; \mathbb Z/2 \mathbb Z) = 0$; or
	\item $k = \frac{m}{2}$, $\Mu$ and $\Nu$ are orientable, and $H_k(\Mu; \mathbb Z)$ and $H_k(\Nu; \mathbb Z)$ are finite.
\end{itemize}
Then $\Skel_k \Mu \cong \Skel_k \Nu$ implies $\Mu \cong \Nu$.
\end{thm}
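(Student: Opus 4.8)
The plan is to reconstruct $\Mu$ from its $k$-skeleton, show that the manifold $\Nu$ carrying an isomorphic $k$-skeleton must be a subcomplex of $\Mu$, and then show this containment is an equality --- in particular that $m=n$. First, though, dispose of the case $k\ge n$: here $\Skel_k\Nu=\Nu$ is a cubical homology manifold, hence so is the isomorphic complex $\Skel_k\Mu$; but if $k<m$ the link of a $(k{-}1)$-face of $\Skel_k\Mu$ is just the discrete set of vertices of a cubical homology sphere of positive dimension $m-k$, which has at least three vertices, so $\Skel_k\Mu$ is not a homology manifold. Therefore $k\ge m$, and $\Mu=\Skel_k\Mu\cong\Skel_k\Nu=\Nu$.

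So assume $k<n\le m$. The hypotheses on $k$ force $m\le 2k$ and $\lceil m/2\rceil\le k$, hence $n\le 2k$, and consequently every homology group entering a face criterion below lies in a degree $\le k-1$ and is already determined by the $k$-skeleton of whichever manifold we compute in. Reconstruct $\Mu$ from $\Delta:=\Skel_k\Mu$ using \cref{thm:main} or \cref{thm:main-tighter}, and identify $\Skel_k\Nu$ with $\Delta$. I claim $\Skel_j\Nu\subseteq\Skel_j\Mu$ for all $j$, by induction on $j\ge k$, the base case $j=k$ being the identification. For the step, let $F$ be a $(j{+}1)$-face of $\Nu$ and set $S^j:=\partial\hat F\cong\partial I^{j+1}$, a subcomplex of $\Skel_j\Nu\subseteq\Skel_j\Mu$. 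Since $S^j$ bounds the face $F$ of $\Nu$, \cref{thm:Sk-homology} gives $H_i(\Nu\setminus S^j)\cong H_i(\Nu)$ for all $i\le n-2$. The degrees demanded by the face criterion for $\Mu$ at level $j$ --- namely $m-j$ and $m-j-1$, or just $k-1$ when $j=k$ and $m=2k$ (where \cref{thm:face-criterion-tighter} is used) --- all lie in $\{0,1,\dots,k-1\}\subseteq\{0,\dots,n-2\}$ because $k<n$. As each of these groups (and the corresponding group for $\Mu$) depends only on $\Delta$ and on $\Delta\setminus S^j$, they agree, so $H_{m-j}(\Mu\setminus S^j)\cong H_{m-j}(\Mu)$ and likewise in degree $m-j-1$; by \cref{thm:face-criterion} (or \cref{thm:face-criterion-tighter}) $S^j$ bounds a $(j{+}1)$-face of $\Mu$, which, a cubical face being determined by its vertex set, must be $F$. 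This completes the induction, so $\Nu\subseteq\Mu$.

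It remains to show $\Nu=\Mu$. Suppose first $m>n$; then $n\le m-1\le 2k-1$, so the degrees $n-j,n-j-1$ are also $\le k-1$ and the previous induction runs with $\Mu$ and $\Nu$ interchanged (now using \cref{thm:Sk-homology} for $\Mu$, whose relevant conclusions hold in degrees $\le m-2$), giving $\Skel_j\Mu=\Skel_j\Nu$ for $j\le n$, in particular $\Skel_n\Mu=\Nu$. But the step at level $j=n$ is then absurd: an $(n{+}1)$-face of $\Mu$ exists, its boundary is a subcomplex $\cong\partial I^{n+1}$ of $\Skel_n\Mu=\Nu$, it does not bound a face of the $n$-dimensional complex $\Nu$, yet \cref{thm:Sk-homology} applied in $\Mu$ (where it does bound a face) shows $H_0$ is unchanged by its deletion, contradicting \cref{thm:Sk-homology} applied in $\Nu$. (Equivalently, this contradiction can be packaged through \cref{thm:invariance-of-domain}: such a $\partial I^{n+1}$ would be an entire connected component of $\Nu$, and then $V(\Nu)=V(\Mu)$ forces $\Mu$ into a single $(n{+}1)$-cube.) Hence $m=n$. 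If $n<2k$, the interchanged induction already yields $\Mu=\Nu$; and if $n=m=2k$, argue combinatorially: $\Nu\subseteq\Mu$ are pure homology $m$-manifolds in which every $(m{-}1)$-face lies in exactly two $m$-faces, so the set of $m$-faces of $\Nu$ is closed under facet-adjacency, hence a union of connected components of $\Mu$; since $V(\Nu)=V(\Mu)$ meets every component, $\Nu=\Mu$.

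The main obstacle is the degree bookkeeping rather than any single deep step: at every level of both inductions one must check that the homology degrees dictated by the face criterion remain simultaneously $\le n-2$ (so that \cref{thm:Sk-homology} keeps the relevant groups invariant under deleting the boundary of a face) and $\le k-1$ (so that the groups are visible in the common $k$-skeleton $\Delta$), and one must switch correctly to the tighter \cref{thm:face-criterion-tighter} at level $k$ when $m=2k$. The inequalities $\lceil m/2\rceil\le k$, $m\le 2k$, and $k<n$ are exactly what make this succeed, and they need to be verified with some care across the even/odd and tight/non-tight cases.
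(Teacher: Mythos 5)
Your proposal is correct in its conclusions and overall shape, but it differs from the paper's argument in three genuine ways, all concentrated in the organisational scaffolding around the common homological core.

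First, you front-load the case $k \ge n$ with a local-homology (link) argument: a $(k-1)$-face of $\Skel_k\Mu$ has $\ge 3$ ``coface'' vertices when $k<m$, so $\Skel_k\Mu$ fails the homology-manifold condition. The paper instead reaches $k \ge n$ only after the induction, and resolves it with the dichotomy on whether $\Skel_n\Mu$ contains a $\partial I^{n+1}$, invoking \cref{thm:invariance-of-domain} and the fact that $I^{n+1}$ is not a closed manifold. Your approach is conceptually cleaner (``proper skeletons are never manifolds'') but appeals to a notion of link for cubical complexes and to the $S^{m-k}$-homology of links in homology manifolds, neither of which is set up in the paper; these facts are true and provable, but they are additional infrastructure. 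Second, you prove the one-sided inclusion $\Nu\subseteq\Mu$ by induction and then argue for equality separately, deriving a contradiction from an $(n{+}1)$-face of $\Mu$ when $m>n$; the paper runs both directions of the face criterion symmetrically at each level and so obtains $\Skel_{k+1}\Mu\cong\Skel_{k+1}\Nu$ directly, deferring the dimension comparison. Third, for the boundary case $m=n=2k$ the paper simply cites \cref{thm:main-tighter}, whereas you give a self-contained combinatorial argument using pseudomanifold strong-connectedness. That argument is correct (it relies on $\mathbb Z/2$-pseudomanifold facts standard for homology manifolds), but the paper's reduction is shorter.

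Your degree bookkeeping is careful and matches the paper's inequalities, including the switch to \cref{thm:face-criterion-tighter} exactly at $j=k$, $m=2k$. One caveat, shared with the paper's own proof: under the third hypothesis one must apply \cref{thm:Sk-homology} with $\mathbb Z$ coefficients to $\Nu$, whose orientability is not explicitly assumed; this can be justified (the relative groups $H_j(\Nu,\Nu\setminus\hat F;\mathbb Z)$ vanish for $j<n$ because $\hat F$ is contractible and the orientation local system is trivial on it), but you inherit rather than resolve this point. Overall: a correct proof via a noticeably different decomposition, trading the paper's streamlined symmetric induction and citation-based base cases for a more ``structural'' argument ($\Nu\subseteq\Mu$, link obstruction, pseudomanifold closure).
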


(Our proof is a simplification of Dancis's simplicial version, but with slightly different hypotheses.)

\begin{proof}
The goal of this proof is to reduce to the case $k \geq n$. We will start by assuming $k < n$, and argue that the $(k+1)$-skeletons of $\Mu$ and $\Nu$ are isomorphic, so by induction, the $n$-skeletons must be isomorphic.

To begin with, let us assume $k < n$ and consider the case where $k \geq \left\lfloor \frac{m}{2} \right\rfloor + 1$. Under these assumptions,
\begin{gather*}
n-k \leq m-k \leq m - \left\lfloor \frac{m}{2} \right\rfloor - 1 \leq k-1 \leq n-2 \leq m-2.
\end{gather*}
The key facts are $n-k \leq k-1$ and $m-k \leq k-1$, which allow us to compute the $(n-k)$th, $(m-k)$th and lower homology groups from the $k$-skeleton, and $n-k \leq m-2$ and $m-k \leq n-2$, which will let us use \cref{thm:Sk-homology}.

Suppose $S^k \cong \partial I^{k+1}$ is a subcomplex of $\Skel_k \Mu \cong \Skel_k \Nu$. If $S^k$ bounds a face in $\Mu$, then \cref{thm:Sk-homology} and the fact that $n-k \leq m-2$ imply that $H_j(\Mu \setminus S^k) \cong H_j(\Mu)$ when $j \in \{n-k, n-k-1\}$. Since $n-k \leq k-1$, these homology groups depend only on the $k$-skeleton of $\Mu$, thus $H_j(\Mu \setminus S^k) \cong H_j(\Nu \setminus S^k)$ and $H_j(\Mu) \cong H_j(\Nu)$ when $j \in \{n-k, n-k-1\}$. Therefore, $H_j(\Nu \setminus S^k) \cong H_j(\Nu)$ when $j \in \{n-k, n-k-1\}$, so \cref{thm:face-criterion} implies that $S^k$ bounds a face in $\Nu$.

Similarly, if $S^k$ bounds a face in $\Nu$, then $H_j(\Mu \setminus S^k) \cong H_j(\Nu \setminus S^k) \cong H_j(\Nu) \cong H_j(\Mu)$ when $j \in \{m-k, m-k-1\}$; hence $S^k$ bounds a face in $\Mu$.

When $k = m/2$, the argument is similar. If $n = m$, the result follows from \cref{thm:main-tighter}, so we may assume $n < m$. Then:
\begin{gather*}
n-k \leq m-k-1 = k-1 \leq n-2 \leq m-2.
\end{gather*}
If $S^k$ bounds a face in $\Mu$, \cref{thm:Sk-homology} and the inequality $n-k \leq m-2$ imply that $H_j(\Nu \setminus S^k; \mathbb Z/2 \mathbb Z) \cong H_j(\Mu \setminus S^k; \mathbb Z/2 \mathbb Z) \cong H_j(\Mu; \mathbb Z/2 \mathbb Z) \cong H_j(\Nu; \mathbb Z/2 \mathbb Z)$ when $j \in \{n-k, n-k-1\}$, so $S^k$ bounds a face in $\Nu$ by \cref{thm:face-criterion}. On the other hand, if $S^k$ bounds a face in $\Nu$, \cref{thm:Sk-homology} (or a modification with $\mathbb Z$ coefficients) implies $H_{k-1}(\Mu \setminus S^k) \cong H_{k-1}(\Nu \setminus S^k) \cong H_{k-1}(\Nu) \cong H_{k-1}(\Mu)$ with coefficients in either $\mathbb Z/2 \mathbb Z$ or $\mathbb Z$, and we conclude from \cref{thm:face-criterion-tighter} that $S^k$ bounds a face in $\Mu$. 

Thus the $(k+1)$-dimensional faces in $\Mu$ and $\Nu$ are the same, in all cases when $k < n$; that is, the $(k+1)$-skeletons of $\Mu$ and $\Nu$ are isomorphic. Therefore, we can inductively increase $k$ to reduce to the case where $k \geq n$. In this case, we have $\Skel_n \Mu \cong \Skel_n \Nu = \Nu$. However, we claim that the only skeletons of an $m$-dimensional manifold that are themselves manifolds are the $0$-skeleton and the $m$-skeleton.

We may assume that $\Mu$ and $\Skel_n \Mu$ are connected, since the components of $\Skel_n \Mu$ are precisely the skeletons of the components of $\Mu$ when $n \geq 1$. If $\Skel_n \Mu \cong \partial I^{n+1}$, the only possible $(n+1)$-dimensional face of $\Mu$ is the cube whose boundary is $\Skel_n \Mu$. Then $\Skel_{n+1} \Mu$ is either $\partial I^{n+1}$ or $I^{n+1}$, and there are no possible faces of higher dimension; but $I^{n+1}$ is not a manifold (without boundary), so $\Mu \cong \partial I^{n+1} \cong \Skel_n \Mu$. Conversely, if $\Skel_n \Mu \not \cong \partial I^{n+1}$, then \cref{thm:invariance-of-domain} implies that there are no subcomplexes of $\Skel_n \Mu$ isomorphic to $\partial I^{n+1}$, and again $\Skel_n \Mu = \Mu$. Therefore, $\Mu \cong \Nu$.
\end{proof}

Dancis gives an example of the cyclic $(d+1)$-polytopes, whose $\left\lfloor \frac{d-1}{2} \right\rfloor$-skeletons are the same as the skeleton of the boundary of a simplex, to show that the bounds on $k$ in the simplicial version of \cref{thm:main-dimensions} cannot be improved to $\left\lfloor \frac{d-1}{2} \right\rfloor$. The neighbourly cubical $d$-spheres and $(d+1)$-polytopes constructed by \citep{art:BBC,art:Joswig-Ziegler-NCP,art:Joswig-Rorig}, whose $\left\lfloor \frac{d-1}{2} \right\rfloor$-skeletons match the boundary of a cube, serve the same purpose for cubical manifolds. (These complexes embed into a cube, since $C_1$ in \citep[Theorem~3.1]{art:BBC} is itself a cube and fissuring preserves embeddability into a cube.)

This paper relied on the cubical manifolds being embeddable in a cube. The question remains: what skeleton determines an arbitrary $d$-dimensional cubical manifold?

\bibliographystyle{abbrvnat}
{\bibliography{Reconstruction-refs.bib}} 

\end{document}